\newtheorem{prop}{Proposition}[section]
\newtheorem{thm}[prop]{Theorem}
\newtheorem{lemma}[prop]{Lemma}
\newtheorem{conj}[prop]{Conjecture}
\theoremstyle{remark}
\newtheorem{exa}[prop]{Example}
\newtheorem{rem}[prop]{Remark}
\newtheorem{defn}[prop]{Definition}
\newtheorem{prob}[prop]{Problem}
\definecolor{darkblue}{rgb}{0.0,0,0.7} % darkblue color
\definecolor{darkred}{rgb}{0.7,0,0} % darkred color
\definecolor{darkgreen}{rgb}{0, .6, 0} % darkgreen color
\newcommand{\definition}[1]{{\color{darkred}\emph{#1}}} % emphasis of a definition
\newcommand{\leading}{\mathsf{in}}
\newcommand{\A}{\mathcal{A}}
\newcommand{\B}{\mathcal{B}}
\newcommand{\C}{\mathcal{C}}
\newcommand{\D}{\mathcal{D}}
\newcommand{\OP}{\mathcal{OP}}
\newcommand{\code}{\mathsf{code}}
\title{Constructing a Gr\"{o}bner basis of Griffin's ideal}
\author[T.~Yu]{Tianyi Yu}
\address[T. Yu]{Department of Mathematics, UC San Diego, La Jolla, CA 92093, U.S.A.}
\email{tiy059@ucsd.edu}
\begin{document}

\maketitle

\begin{abstract}
In his Ph.D. thesis, Sean Griffin introduced a family of ideals 
and found monomial bases for their quotient rings.  
These rings simultaneously generalize 
the Delta Conjecture coinvariant rings of Haglund-Rhoades-Shimozono and the cohomology rings of Springer fibers studied by Tanisaki and Garsia-Procesi.
We recursively construct 
a Gr\"{o}bner basis of Griffin's ideals
with respect to the graded reverse lexicographical order.
Consequently, Griffin's monomial basis
is the standard monomial basis.
Coefficients of polynomials in our Gr\"{o}bner basis 
are integers and leading coefficients are one.   
\end{abstract}

\section{Introduction}

For $n \in \mathbb{Z}_{>0}$, a partition $\lambda$, and $s \in \mathbb{Z}_{>0} \sqcup \{\infty\}$ with  $1 \leq |\lambda| \leq n$ and $s \geq \ell(\lambda)$,
Griffin introduced ideals $I_{n, \lambda, s}$ in the polynomial ring $\mathbb{Q}[x_1, \cdots, x_n]$.
Let $R_{n, \lambda, s} := \mathbb{Q}[x_1, \cdots, x_n]/I_{n, \lambda, s}$ be their corresponding quotient ring.
The ideal $I_{n, \lambda,s}$ and the ring
$R_{n,\lambda, s}$
generalize the coinvariant ideal
$I_n := I_{n, (1)^n, \infty}$ 
and the coinvariant ring $R_n := R_{n, (1)^n, \infty}$, 
where $(1)^n$ is
the partition with $n$ copies of $1$.
The coinvariant ring $R_n$ presents 
the cohomology of complete flag variety in
type $\textrm{A}_{n-1}$.
The ideal $I_n$ and the ring $R_n$ 
have two other generalizations, 
both of which can be recovered 
as special cases of $I_{n, \lambda, s}$ and
$R_{n, \lambda, s}$.
\begin{itemize}
\item For a partition $\lambda$,
$I_{|\lambda|, \lambda, \infty}$ is the Tanisaki ideal $I_{\lambda}$ and 
$R_{|\lambda|, \lambda, \infty}$ is the Tanisaki quotient $R_{\lambda}$
studied by Tanisaki~\cite{Ta} and Garsia-Procesi~\cite{GP}.
The ring $R_\lambda$ presents the cohomology of the Springer fiber of $\lambda$.
\item For some
$k \leq n$ and $k \leq s < \infty$, the $I_{n, (1)^k, s}$
and $R_{n, (1)^k, s}$ correspond to the 
$I_{n,k,s}$ and $R_{n,k,s}$
introduced by Haglund-Rhoades-Shimozono~\cite{HRS}.
The ring $R_{n, k, s}$
gives a representation-theoretic model for the Haglund-Remmel-Wilson
Delta Conjecture~\cite{HRW}. 
Pawlowski and Rhoades proved that 
$R_{n, k, s}$ presents the cohomology of $n$-tuples $(\ell_1, ... , \ell_n)$ of lines in $\mathbb{C}^n$ such that the projection of $\ell_1 + .... + \ell_n$ onto $\mathbb{C}^k$ is surjective.
\end{itemize}

The theory of 
Gr\"{o}bner basis plays a crucial role in
the study of ideals in polynomial rings.
Most computation problems, such as determining
whether a given polynomial is in the ideal
or finding generators of intersections of ideals,
can be answered with Gr\"{o}bner bases.
Moreover, 
a Gr\"{o}bner basis can be used to find
a monomial basis of the quotient ring,
known as the standard monomial basis.
Haglund, Rhoades and Shimozono
found the reduced Gr\"{o}bner basis 
of $I_{n, k, s}$ to 
obtain a monomial basis of $R_{n, k, s}$.

In this paper, we find a Gr\"{o}bner basis of $I_{n, \lambda, s}$ with 
respect to the graded reverse lexicographical order.  
To the best of the author's knowledge, 
finding a Gr\"{o}bner basis for even Tanisaki ideals $I_{\lambda}$ remained open.
The form of our Gr\"{o}bner basis implies Griffin's monomial basis of $R_{n, \lambda, s}$ in~\cite{G} is the standard monomial basis.
One notable feature of our Gr\"{o}bner basis 
is that its polynomials have integer coefficients
with leading coefficient one.  
This is rarely the case,
even for ideals with nice generating sets 
and important ties to algebra, geometry and combinatorics.

The Gr\"{o}bner bases we construct
are not minimal
thus not reduced. 
However, since each polynomial has integer coefficients with leading coefficient $1$,
we can deduce
the polynomials in the reduced Gr\"{o}bner bases
also have integer coefficients.
We believe our work is a good starting point for addressing the following open problem:
\begin{prob}
Find the reduced Gr\"{o}bner bases of all $I_{n, \lambda, s}$.
\end{prob}

The rest of the paper is structured as follows. 
In \S\ref{S: Background}, 
we provide necessary background
on Griffin's ideals and Gr\"{o}bner bases.
We then deduce that 
a Gr\"{o}bner basis of $I_{n, \lambda, s}$
when $s < \infty$
can be obtained by adding 
$x_1^s, \cdots, x_n^s$ 
to a Gr\"{o}bner basis of $I_{n, \lambda, \infty}$.
In \S\ref{S: container},
we find a finite set of monomials $B_{n, \lambda}$ 
that generates the initial 
ideal of $I_{n, \lambda, \infty}$.
Our main tool is 
a combinatorial construction 
called container diagram
developed by Rhoades, Yu and Zhao~\cite{RYZ}.
Finally, in \S\ref{S: construction},
we recursively construct 
a polynomial in $I_{n, \lambda, \infty}$
with leading monomial $x^\alpha$ for
each $x^\alpha \in B_{n, \lambda}$.
Consequently, 
the polynomials we construct form a
Gr\"{o}bner basis of $I_{n, \lambda, \infty}$.

\section{Background}
\label{S: Background}

\subsection{Defining $I_{n, \lambda, s}$ and $R_{n, \lambda, s}$ }

Given $S \subseteq [n] := \{1, \cdots, n\}$,
let $e_d(S)$ be the \definition{elementary symmetric function} of degree $d$ with
variable set $\{x_i: i \in S\}$:
$$
e_d(S) := \sum_{\{i_1 < i_2 < \cdots < i_d\} \subseteq S} x_{i_1}x_{i_2} \cdots x_{i_d}.
$$
By convention, $e_{0}(S) = 1$ for any $S$ and
$e_d(S) = 0$ if $d > |S|$ or $d < 0$.
We recall one simple but useful identity:
\begin{equation}
\label{EQ: e}
e_d(S \sqcup \{ 1 \}) = e_d(S) + x_1 e_{d-1}(S),
\end{equation}
where $d$ is any integer and $S$ is a set not
containing $1$.

A \definition{partition} $\lambda$ is a weakly decreasing 
sequence of non-negative integers with
only finitely many positive entries.
When writing down a partition, 
we often ignore the trailing $0$s.
We denote its $i^{th}$ entry of $\lambda$ by $\lambda_i$.
Let $\ell(\lambda)$ and $|\lambda|$ denote the number of 
positive entries 
and the sum of entries in $\lambda$ respectively.
We say $\lambda$ is a partition of $n$ if 
$n = |\lambda|$.
The \definition{conjugate} of $\lambda$, denoted as $\lambda'$,
is a partition with $\lambda'_i = |j: \lambda_j \geq i|$.
Following Griffin's convention,
for $n \geq m$,
we define $p_m^n(\lambda) := \lambda'_{n - m + 1} + 
\lambda'_{n - m + 2} + \cdots$.

\begin{exa}
\label{E: partition}
If $\lambda = (3,2)$, we have $\lambda' = (2,2,1)$.
Say $n = 7$, then $p_7^7(\lambda)= 2 + 2 + 1 = 5$,
$p_6^7(\lambda) = 2 + 1 = 3$,
$p_5^7(\lambda) = 1$, and 
$p_m^7(\lambda) = 0$ if $m < 5$.
\end{exa}

Griffin's ideal $I_{n, \lambda, s}$ and 
Griffin's ring $R_{n, \lambda, s}$ can be defined as follows.

\begin{defn}\cite[Definition 3.0.1]{G}
\label{D: I}
Take a number $n$.
Let $\lambda$ be a partition with $1 \leq |\lambda| \leq n$.
If $s = \infty$, $I_{n,\lambda, s}$ is the ideal of 
$\mathbb{Q}[x_1, \cdots, x_n]$
generated by:
$$e_d(S) \textrm{ for all } 
S \subseteq [n], d > |S| - p_{|S|}^n(\lambda).$$
Now if $s$ is a number such that $s \geq \ell(\lambda)$.
Then $I_{n,\lambda, s}$ is the ideal generated by
the generators of $I_{n, \lambda, \infty}$ above
together with $x_1^s, \cdots, x_n^s$.

Finally, let $R_{n,\lambda,s} := \mathbb{Q}[x_1, \cdots, x_n]/I_{n,\lambda, s}$
for all $\ell(\lambda) \leq s \leq \infty$.
\end{defn}

\begin{exa}
Following Example~\ref{E: partition},
we let $n = 7$, $\lambda = (3,2)$.
Then $I_{n, \lambda, \infty}$ is generated by  $e_d(S)$ where
$d$ and $S \subseteq [7]$ can take the following values.
\begin{itemize}
\item If $|S| = 7$,
$d > |S| - 5 = 2$.
\item If $|S| = 6$,
$d > |S| - 3 = 3$.
\item If $|S| = 5$,
$d > |S| - 1 = 4$.
\item If $|S| < 5$,
$d > |S| - 0 = |S|$,
forcing $e_d(S)$ to vanish.
\end{itemize}
The ideal $I_{n, \lambda, 5}$ is generated by the generators above
together with $x_1^5, \cdots, x_7^5$.
\end{exa}

\begin{rem}
We may slightly extend the definition 
of $I_{n, \lambda, s}$ and $R_{n, \lambda, s}$.
When $|\lambda| > n$, we still define $I_{n, \lambda, s}$
and $R_{n, \lambda, s}$ in the same way as above. 
In this case, 
$p_n^n(\lambda) = |\lambda| > n$,
so $e_0([n])$ is a generator of $I_{n, \lambda, s}$.
Notice that $e_0([n]) = 1$, 
so $I_{n, \lambda, s}$ is just 
$\mathbb{Q}[x_1, \cdots, x_n]$ and 
$R_{n, \lambda, s}$ is the trivial ring. 
Our extension does not introduce anything interesting,
but makes some of our recursive arguments easier to state.  
\end{rem}

\subsection{Gr\"{o}bner basis}
We recall some fundamental background regarding 
the Gr\"{o}bner basis.
A \definition{monomial order} is a total order $<$ on the monomials
of $x_1, x_2, \cdots, x_n$
such that 
\begin{itemize}
\item For any monomial $m$, $1 \leq m$, and 
\item For any three monomials $m_1, m_2$ and $m_3$, $m_1 < m_2$ implies $m_1m_3 < m_2m_3$.
\end{itemize}

Let $<$ be a monomial order.
For $f$ in $\mathbb{Q}[x_1, \cdots, x_n]$,
its \definition{leading monomial} with respect to $<$, 
denote as $\leading_<(f)$,
is the largest monomial
with non-zero coefficient in $f$.
For an ideal $I$ of $\mathbb{Q}[x_1, \cdots, x_n]$,
its \definition{initial ideal},
denoted as $\leading_<(I)$,
is the ideal generated 
by $\leading_<(f)$ for $f \in I - \{0\}$.
Then $\{g_1, \dots, g_N\} \subseteq I$ is a  \definition{Gr\"{o}bner basis} of $I$ 
if $\langle \leading_<(g_1), \dots, \leading_<(g_N) \rangle = \leading_<(I)$.
This implies that 
$g_1, \dots, g_N$ generate $I$.
Furthermore,
we say $\{g_1, \dots, g_N\}$
is a \definition{reduced Gr\"{o}bner basis} if
\begin{itemize}
\item The coefficient of $\leading_<(g_i)$ in $g_i$ is 1, and
\item For $i \neq j$, $\leading_<(g_i)$ does not divide
any monomial with a non-zero coefficient in $g_j$.
\end{itemize}
Every ideal $I$ has exactly one reduced Gr\"{o}bner basis
with respect to a given monomial order. 

One application of Gr\"{o}bner bases is to 
find a monomial basis of the quotient ring. 
Consider an ideal $I$ and some monomial order $<$. 
A monomial $m$ is called a \definition{standard monomial}
if $m$ is not in $\leading_<(I)$.
It is known that 
$\{m + I: m \textrm{ is a standard monomial}\}$
forms a basis of $\mathbb{Q}[x_1, \cdots, x_n]/I$.
This basis is called the \definition{standard monomial basis}.
Notice that this basis is closed under
taking factor: 
If $m + I$ is in this basis and let $m'$
be a factor of $m$,
then $m'+I$ is also in this basis.
Determine whether a monomial is standard
can be a painful process since
one needs to determine whether a monomial
is in an ideal.
The Gr\"{o}bner basis can make this process
painless. 
Let $G$ be a Gr\"{o}bner basis
of $I$ with respect to $<$.
A monomial $m$ is a standard monomial if and only if
$\leading_<(g)$ does not divide $m$ for any $g \in G$.

One classical monomial order is the 
\definition{lexicographical order} $<_{lex}$.
We may define this order as 
$x_1^{a_1} x_2^{a_2} \cdots 
<_{lex} x_1^{b_1} x_2^{b_2} \cdots$
if there exists $i$ such that $a_j = b_j$
for $j > i$ and $a_j < b_j$.\footnote{To make our statements concise, we reverse the 
usual definition of monomial orders 
in this paper. 
Usually, the ``$j > i$'' in the definition
of lexicographical order 
is replaced by $j < i$.}
A Gr\"{o}bner basis can be hard to find
and its polynomials rarely have integer coefficients.
Nevertheless, the reduced Gr\"{o}bner bases of $I_n$, 
and more generally $I_{n,k,s}$, with
respect to the lexicographical order
are well-behaved classical polynomials.
It is a well-known result that 
$$\{h_d([n - d + 1]): d \in [n]\}, 
\textrm{ where } h_d([n - d + 1]) := \sum_{1 \leq i_1 \leq \cdots \leq i_d \leq n-d+1} x_{i_1} 
\cdots x_{i_d}$$ 
is 
the reduced Gr\"{o}bner basis of $I_n$ (see~\cite[Theorem 1.2.7]{Stu}
and~\cite[Sec 7.2]{Ber}).
Haglund, Rhoades and Shimozono extend
this result. 
In~\cite[Theorem 4.14]{HRS},
they show the reduced Gr\"{o}bner basis
of $I_{n,k,s}$ consists of certain \definition{key polynomials}
together with $x_1^s, \cdots, x_n^s$ when $k < s$.
A notable feature of this reduced Gr\"{o}bner basis is that
its polynomials all have positive integer coefficients. 

This paper considers a different monomial order.

\begin{defn}\cite[\S 2 Definition 6]{CLOS}
Define the \definition{graded reverse lexicographical order} $<_{grevlex}$
as $x_1^{a_1} x_2^{a_2} \cdots <_{grevlex} x_1^{b_1} x_2^{b_2} \cdots$
if either of the following holds
\begin{itemize}
\item The degree of $x_1^{a_1} x_2^{a_2} \cdots$ 
is less than
the degree of $x_1^{b_1} x_2^{b_2} \cdots$, or
\item They have the same degree and 
there exists $i$ such that $a_j = b_j$
for all $1 \leq j < i$ and $a_j > b_j$.
\end{itemize} 
\end{defn}

After switching to this order, 
the reduced Gr\"{o}bner bases mentioned above 
remain to be reduced Gr\"{o}bner bases.
\begin{itemize}
\item For each $h_{d}([n-d+1])$, its leading monomial 
is $x_{n - d + 1}^{d}$
with respect to either $<_{lex}$ or $<_{grevlex}$.
Consequently, $\{h_d([n - d + 1]): d \in [n]\}$ is 
still the reduced Gr\"{o}bner basis of $I_n$
with respect to $<_{grevlex}$.
\item 
Furthermore, 
the leading monomial of every key polynomial
is the same with respect to $<_{lex}$ or $<_{grevlex}$.
Thus, the reduced Gr\"{o}bner basis found by Haglund, Rhoades and Shimozono
is also the reduced Gr\"{o}bner basis of $I_{n,k,s}$
with respect to $<_{grevlex}$.
\end{itemize}

We conjecture this pattern holds in general:
\begin{conj}
The reduced Gr\"{o}bner basis of $I_{n,\lambda,s}$
is the same for $<_{grevlex}$ or $<_{lex}$.
\end{conj}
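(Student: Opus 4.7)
The strategy is to reduce the conjecture to the statement that the two initial ideals coincide: $\leading_{<_{grevlex}}(I_{n,\lambda,s}) = \leading_{<_{lex}}(I_{n,\lambda,s})$. This suffices by a standard argument. Suppose $I$ is an ideal and $<_1, <_2$ are two monomial orders with $\leading_{<_1}(I) = \leading_{<_2}(I) =: M$. If $g$ lies in the reduced Gr\"{o}bner basis under $<_1$, then $\leading_{<_1}(g)$ is a minimal generator of $M$ and by reducedness every other monomial of $g$ lies outside $M$; since $g \in I$, also $\leading_{<_2}(g) \in M$, forcing $\leading_{<_2}(g) = \leading_{<_1}(g)$. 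One then checks that the reducedness conditions transfer to $<_2$, so the two reduced Gr\"{o}bner bases agree.

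The conjecture for general $s$ reduces to the case $s = \infty$: as noted in \S\ref{S: Background}, a Gr\"{o}bner basis of $I_{n,\lambda,s}$ is obtained by adjoining $x_1^s, \dots, x_n^s$ to a Gr\"{o}bner basis of $I_{n,\lambda,\infty}$, and each $x_i^s$ is its own leading monomial under either order. So an equality of initial ideals for $s = \infty$ propagates to finite $s$.

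For $s = \infty$ the plan is to combine the Gr\"{o}bner basis $G$ constructed in this paper's main theorem with a Hilbert-series comparison. The key step is to show that for every $g \in G$, $\leading_{<_{lex}}(g) = \leading_{<_{grevlex}}(g)$. This would yield
$\leading_{<_{grevlex}}(I_{n,\lambda,\infty}) = \langle \leading_{<_{lex}}(g) : g \in G \rangle \subseteq \leading_{<_{lex}}(I_{n,\lambda,\infty})$.
Since $I_{n,\lambda,\infty}$ is homogeneous, both initial ideals share the Hilbert series of $R_{n,\lambda,\infty}$; a containment of homogeneous monomial ideals with equal Hilbert series must be an equality.

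The hard part is the per-polynomial leading-monomial coincidence. Because the polynomials in $G$ are built recursively in \S\ref{S: construction}, the natural approach is an induction mirroring the construction: at each step, given a polynomial whose grevlex-leading monomial is some $x^\alpha \in B_{n,\lambda}$, one must show that no other monomial appearing in it beats $x^\alpha$ under $<_{lex}$. Intuitively this looks plausible because the container-diagram monomials in $B_{n,\lambda}$ concentrate their exponents on high-indexed variables, which is precisely the pattern preferred by both $<_{grevlex}$ and the reversed $<_{lex}$ used here; the real work is to promote this intuition to an inductive invariant that survives the recursive assembly.
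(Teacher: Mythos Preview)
The statement you are attempting to prove is labeled a \emph{Conjecture} in the paper; the paper offers no proof, only computer verification for $n,s \leq 7$. So there is no paper-proof to compare against, and your proposal must be judged on its own.

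Your reduction steps (matching initial ideals $\Rightarrow$ matching reduced Gr\"obner bases; reducing finite $s$ to $s=\infty$; Hilbert-series upgrade of a containment to an equality) are standard and correct. The gap is in the ``hard part'': you propose to show that for every $g$ in the constructed basis $G_{n,\lambda}$ one has $\leading_{<_{lex}}(g)=\leading_{<_{grevlex}}(g)$. You yourself flag this as unproved, and in fact it is \emph{false}. The worked example at the end of \S\ref{S: construction} produces
\[
F \;=\; x_2x_3\,e_1(\{1,\dots,5\}) - x_2\,e_2(\{1,\dots,5\}) + e_3(\{1,2,4,5\}) \;\in\; G_{5,(2,2,1)}
\]
with $\leading_{<_{grevlex}}(F)=x_2x_3^2$. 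Extracting the coefficient of $x_5$ in $F$ gives $x_1x_4 - x_2^2$, so the monomial $x_1x_4x_5$ occurs in $F$ with nonzero coefficient; under the paper's $<_{lex}$ (where $x_5$ is most significant) this beats $x_2x_3^2$, so $\leading_{<_{lex}}(F)\neq\leading_{<_{grevlex}}(F)$. The recursive construction of \S\ref{S: construction} is engineered specifically for grevlex via the $x_1$-filtration in Remark~\ref{R: approach}, and there is no mechanism controlling the lex leading terms of its outputs. Any attack on the conjecture along your lines would need either a genuinely different family of witness polynomials, or an argument at the level of initial ideals that does not pass through the particular $G_{n,\lambda}$ built here.
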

This conjecture has been computer checked for 
all $n \leq 7$ and $s \leq 7$.

Hereafter, 
we use
$<$ to denote the
graded reverse lexicographical order,
in place of $<_{grevlex}$.

\subsection{Griffin's monomial basis of $R_{n,\lambda,s}$}

Before constructing the Gr\"{o}bner basis,
we need to know what monomials are in 
the initial ideal $\leading_{<}(I_{n, \lambda,s})$.
We may predict the structure of 
$\leading_{<}(I_{n, \lambda,s})$
from
Griffin's study of $R_{n, \lambda, s}$.
First,  recall a few definitions in Griffin's thesis.

A \definition{weak composition} of length $n$
is a sequence of $n$ non-negative integers. 
If $\alpha$ is a weak composition, we use $\alpha_i$
to denote its $i^{th}$ entry.
Given weak compositions 
$\alpha, \beta$. 
we say $\gamma$ is 
a \definition{shuffle} of $\alpha$ and $\beta$
if $\gamma$ is obtained 
by inserting entries of $\alpha$
into $\beta$ while preserving the relative order.
A shuffle of more than two weak compositions can be defined similarly (or recursively). 
By convention, the unique shuffle of a single
weak composition is just itself.

Let $n$ be a positive number. 
The \definition{staircase} of length $n$
is the weak composition 
$\rho^{(n)} := (n-1, n-2, \dots, 1,0)$.\footnote{Notice that our convention of a staircase
is the reverse of the Griffin's convention.}
Take a partition $\lambda$ with $1 \leq |\lambda| \leq n$
and a number $\ell(\lambda) \leq s < \infty$.
We say a weak composition is a  
\definition{$(n, \lambda, s)$-staircase}
if it is a shuffle of the following weak compositions:
$$
\rho^{\lambda'_1}, \cdots, \rho^{\lambda'_{\lambda_1}}, (s-1)^{n - |\lambda|},
$$
where the last weak composition consists 
of $n - |\lambda|$ copies of $s-1$.
Let $\C_{n, \lambda,s}$ be the set of weak compositions of length $n$ 
that are entry-wise less than or equal to
some $(n, \lambda, s)$-staircase.
Finally, let $\C_{n, \lambda,\infty} := \bigcup_{s \geq \ell(\lambda)} \C_{n, \lambda,s}$.

Let $\alpha$ be a 
weak compositions of length $n$. 
We use $x^\alpha$ to denote the monomial
$x_1^{\alpha_1} \cdots x_n^{\alpha_n}$.
Then define 
$\A_{n,\lambda, s}:= 
\{x^\alpha: \alpha \in \C_{n,\lambda, s}\}$.
These monomials form Griffin's monomial basis.

\begin{thm}\cite[Theorem 3.1.17]{G}
The quotient ring $R_{n, \lambda, s}$
has basis $\{m + I_{n, \lambda, s}: 
m \in \A_{n, \lambda, s}\}$.
\end{thm}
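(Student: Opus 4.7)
The plan is to derive this basis result from an explicit Gr\"{o}bner basis computation. If I can produce a Gr\"{o}bner basis $G \subseteq I_{n,\lambda,s}$ with respect to $<$ whose leading ideal is exactly the monomial ideal generated by $\{x^\beta : \beta \notin \C_{n,\lambda,s}\}$, then the standard monomials coincide with $\A_{n,\lambda,s}$ and these automatically form a basis of $R_{n,\lambda,s}$.

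First I would reduce to the case $s = \infty$. Adjoining $x_1^s, \ldots, x_n^s$ contributes the leading monomials $x_i^s$, and because every $(n,\lambda,s)$-staircase has all entries at most $s-1$, one has
\[
\C_{n,\lambda,s} = \{\alpha \in \C_{n,\lambda,\infty} : \alpha_i < s \text{ for all } i\}.
\]
Thus it suffices, when $s = \infty$, to produce a Gr\"{o}bner basis whose leading ideal is generated by a finite monomial set $B_{n,\lambda}$ such that $\{x^\alpha : \alpha \in \C_{n,\lambda,\infty}\}$ is exactly the set of monomials not divisible by any element of $B_{n,\lambda}$.

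Next, I would read off $B_{n,\lambda}$ from the combinatorics of $\C_{n,\lambda,\infty}$. Since $\C_{n,\lambda,\infty}$ consists of entry-wise dominations of shuffles of the staircases $\rho^{\lambda'_j}$, the complementary monomial ideal should be finitely generated by the minimal ``forbidden patterns'', i.e.\ the minimal exponent vectors that cannot fit underneath any shuffle-staircase. The container diagram framework of Rhoades-Yu-Zhao gives a clean combinatorial device for enumerating such patterns, and I would use it to pin down $B_{n,\lambda}$ and to verify that no monomial in $\A_{n,\lambda,\infty}$ is divisible by an element of $B_{n,\lambda}$.

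The main obstacle is the final step: for each $x^\alpha \in B_{n,\lambda}$ I must exhibit an explicit $f_\alpha \in I_{n,\lambda,\infty}$ with $\leading_<(f_\alpha) = x^\alpha$ and leading coefficient $1$. I would proceed by recursion on $n$ and on $\alpha$, combining the elementary generators $e_d(S)$ through the identity~(\ref{EQ: e}) and multiplying by previously constructed polynomials. The delicate point is leading-term control under $<$: because grevlex compares first by total degree and then by the \emph{rightmost} disagreement, a careless combination can let a lower-degree or far-left-supported monomial overtake $x^\alpha$ as the leading term. Designing the recursion so that the intended monomial survives at every step, and verifying that the resulting polynomial actually lies in $I_{n,\lambda,\infty}$ with the claimed leading coefficient $1$, is where I expect the bulk of the technical work to reside.
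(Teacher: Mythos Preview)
There is a genuine gap in your plan, and it stems from a misreading of the role this theorem plays in the paper. The paper does \emph{not} prove this statement: it is cited from Griffin's thesis and used as a black box. In fact, the paper's logic runs in the opposite direction from yours. Lemma~\ref{L: main} takes Griffin's basis theorem as input and uses it (via a dimension count) to certify that the constructed set $G_{n,\lambda}$ is a Gr\"obner basis. You are proposing to run this implication backwards, and that does not work without an additional ingredient.

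Concretely, your sketch produces, for each $x^\alpha \in B_{n,\lambda}$, a polynomial $f_\alpha \in I_{n,\lambda,\infty}$ with $\leading_<(f_\alpha)=x^\alpha$. This establishes only the containment
\[
\langle \A_{n,\lambda,\infty}^C\rangle \;\subseteq\; \leading_<(I_{n,\lambda,\infty}),
\]
which is exactly the easy direction: it shows that $\A_{n,\lambda,\infty}$ \emph{spans} $R_{n,\lambda,\infty}$. It does not show linear independence, equivalently it does not show the reverse containment $\leading_<(I_{n,\lambda,\infty}) \subseteq \langle \A_{n,\lambda,\infty}^C\rangle$, and hence it does not show that your set $G=\{f_\alpha\}$ is a Gr\"obner basis at all. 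In the paper this reverse containment is obtained precisely by invoking Griffin's theorem: since the standard monomials form a basis and are a subset of Griffin's basis $\A_{n,\lambda,\infty}$, the two sets must coincide. Without Griffin's theorem you would need an independent argument---a direct Hilbert series or graded-character computation for $R_{n,\lambda,s}$, a Buchberger-criterion verification, or some other lower bound on $\dim R_{n,\lambda,s}$---and your proposal supplies none.
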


\begin{rem}
Consider $(n, \lambda, s)$ where $|\lambda| > n$.
Recall that we generalize $I_{n, \lambda, s}$ and $R_{n, \lambda, s}$
as $\mathbb{Q}[x_1, \cdots, x_n]$
and the trivial ring when $|\lambda| > n$.
We may set $\A_{n, \lambda, s} = \C_{n, \lambda, s} = \emptyset$.
Then Griffin's theorem still holds in this case: 
The trivial ring has the empty set as its basis. 
Again, our generalization is not introducing anything interesting,
but will make our later work easier to state.  
\end{rem}

Notice that Griffin's monomial basis 
is closed under taking factor.
This is a property shared by 
all standard monomial bases. 
It turns out that Griffin's basis
is the standard monomial basis
of $R_{n, \lambda, s}$
with respect to the graded reverse 
lexicographical order.
Let $\A_{n,\lambda,s}^C$ 
denote the set of monomials of 
$x_1, \cdots, x_n$ that are not in 
$\A_{n, \lambda, s}$.
We deduce the following: 

\begin{lemma}
\label{L: main}
Take $(n,\lambda)$
with $|\lambda| \leq n$.
If we can find a finite 
set of polynomials $G_{n, \lambda} \subseteq I_{n, \lambda, \infty}$
such that 
$$\langle \{\leading_<(F): F \in G_{n, \lambda} \}\rangle \: = \: \langle A^C_{n, \lambda, \infty} \rangle,$$
then $G_{n, \lambda}$
is a Gr\"{o}bner basis
of $I_{n, \lambda, \infty}$.
Moreover, $A_{n, \lambda, \infty}$
is the set of standard monomials of $I_{n, \lambda, \infty}$.

Now take any $s$ such that
$\ell(\lambda) \leq s < \infty$.
Then $G_{n, \lambda} \cup \{ x_1^s, \cdots, x_n^s\}$
is a Gr\"{o}bner basis
of $I_{n, \lambda, s}$.
Moreover, $A_{n, \lambda, s}$
is the set of standard monomials of $I_{n, \lambda, s}$.

\end{lemma}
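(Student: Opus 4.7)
The plan is to combine the hypothesis with Griffin's Theorem and the general fact that the cosets of standard monomials always form a basis of the quotient ring. Specifically, I would show that, granted the hypothesis, the standard monomial basis of $R_{n,\lambda,\infty}$ (resp.\ $R_{n,\lambda,s}$) coincides with Griffin's basis $\A_{n,\lambda,\infty}$ (resp.\ $\A_{n,\lambda,s}$), after which the Gr\"obner basis statement drops out of the definition of the initial ideal.

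I would start with the $s = \infty$ case. Since $G_{n,\lambda} \subseteq I_{n,\lambda,\infty}$, one has automatically $\langle \{\leading_<(F) : F \in G_{n,\lambda}\}\rangle \subseteq \leading_<(I_{n,\lambda,\infty})$. Combining with the hypothesis gives $\langle \A^C_{n,\lambda,\infty}\rangle \subseteq \leading_<(I_{n,\lambda,\infty})$, so every monomial in $\A^C_{n,\lambda,\infty}$ is non-standard; equivalently, the set of standard monomials is contained in $\A_{n,\lambda,\infty}$. For the reverse inclusion I would invoke Griffin's Theorem, which asserts that $\{m + I_{n,\lambda,\infty} : m \in \A_{n,\lambda,\infty}\}$ is a $\mathbb{Q}$-linearly independent spanning set. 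If some $m \in \A_{n,\lambda,\infty}$ were non-standard, then since the standard monomial cosets already span $R_{n,\lambda,\infty}$, I could write $m + I_{n,\lambda,\infty}$ as a finite $\mathbb{Q}$-linear combination of cosets of other elements of $\A_{n,\lambda,\infty}$, producing a nontrivial dependence and contradicting Griffin. Hence the standard monomials are exactly $\A_{n,\lambda,\infty}$, which forces $\leading_<(I_{n,\lambda,\infty}) = \langle \A^C_{n,\lambda,\infty}\rangle = \langle \{\leading_<(F) : F \in G_{n,\lambda}\}\rangle$, the Gr\"obner basis condition.

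For finite $s$, set $G' := G_{n,\lambda} \cup \{x_1^s, \ldots, x_n^s\}$. Since the $x_i^s$ are among the defining generators of $I_{n,\lambda,s}$, we have $G' \subseteq I_{n,\lambda,s}$, and the ideal generated by the leading monomials of $G'$ equals $\langle \A^C_{n,\lambda,\infty}\rangle + \langle x_1^s, \ldots, x_n^s\rangle$. Unpacking the definition of an $(n,\lambda,s)$-staircase shows that $x^\alpha \in \A^C_{n,\lambda,s}$ iff either $x^\alpha \in \A^C_{n,\lambda,\infty}$ or some $\alpha_i \geq s$, so this ideal coincides with $\langle \A^C_{n,\lambda,s}\rangle$. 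Re-running the $s=\infty$ argument, now with Griffin's Theorem applied to $R_{n,\lambda,s}$, then identifies the standard monomials with $\A_{n,\lambda,s}$ and shows $G'$ is a Gr\"obner basis of $I_{n,\lambda,s}$.

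I do not anticipate a serious obstacle: the statement is essentially a formal consequence of Griffin's Theorem and the standard monomial basis principle. The one subtlety worth flagging is that $\A_{n,\lambda,\infty}$ may be infinite when $|\lambda| < n$, but Griffin's \emph{linear independence} is strong enough to preclude any element of $\A_{n,\lambda,\infty}$ from being redundant, as outlined above. The genuinely difficult input -- actually producing a set $G_{n,\lambda}$ whose leading monomials cut out $\langle \A^C_{n,\lambda,\infty}\rangle$ -- is exactly the hypothesis of this lemma and is deferred to later sections of the paper.
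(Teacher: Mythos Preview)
Your proposal is correct and follows essentially the same approach as the paper: both deduce $\langle \A^C_{n,\lambda,\infty}\rangle \subseteq \leading_<(I_{n,\lambda,\infty})$ from the hypothesis, then use Griffin's basis theorem to upgrade this containment to equality (the paper phrases this as ``a subset of a basis that is itself a basis must be the whole set,'' which is equivalent to your linear-independence contradiction), and for finite $s$ both use the decomposition $\A^C_{n,\lambda,s} = \A^C_{n,\lambda,\infty} \cup \{\text{monomials divisible by some }x_i^s\}$ before rerunning the argument.
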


\begin{proof}
The existence of $G_{n, \lambda}$ implies that
$\langle \A^C_{n,\lambda, \infty} \rangle \subseteq
\leading_<(I_{n, \lambda, \infty})$.
Thus, the set of standard monomials
of $I_{n, \lambda, \infty}$
is a subset of $\A_{n, \lambda, \infty}$.
Recall the standard monomials descends to a basis 
of $R_{n,\lambda, \infty}$.
A subset of Griffin's monomial basis is a basis, 
so this ``subset'' has to be the whole set. 
In other words, monomials in 
$A_{n, \lambda, \infty}$ are all standard monomials.
Since any monomial in $A_{n, \lambda, \infty}$
is not in $\leading_{<}(I_{n, \lambda, \infty})$,
we know $\langle \A^C_{n,\lambda, \infty} \rangle =
\leading_<(I_{n, \lambda, \infty})$.
Thus, $G_{n, \lambda}$
is a Gr\"{o}bner basis.

Now take $\ell(\lambda) < s < \infty$.
We can write $\A^C_{n, \lambda, s}$
as a union of two sets:
$$
\A^C_{n, \lambda, \infty}
\quad \bigcup \quad
\{\textrm{monomials divisible by some $x_i^s$}\} .
$$

The first set is a subset of $\leading_<(I_{n, \lambda, \infty})$,
which is a subset of $\leading_<(I_{n, \lambda, s})$.
The second set of monomials
is a subset of $\leading_<(I_{n, \lambda, s})$ since $x_1^s, \cdots, x_n^s$ are generators of $I_{n, \lambda, s}$.
Again, we have $\langle \A^C_{n,\lambda, s} \rangle \subseteq
\leading_<(I_{n, \lambda, s})$.
We perform the argument
in the previous paragraph.
Consequently, 
we know $A_{n,\lambda, s}$
is the set of standard monomials 
of $I_{n, \lambda, s}$
and
$\langle A^C_{n, \lambda, s}\rangle = \leading_<(I_{n, \lambda, s})$.
Finally, since 
$G_{n, \lambda} \subseteq I_{n, \lambda, \infty} \subseteq I_{n, \lambda, s}$,
the set $G_{n, \lambda} \cup \{x_1^s, \cdots, x_n^s\}$
is a Gr\"{o}bner basis
of $I_{n, \lambda, s}$.
\end{proof}

The rest of this paper aims to find the $G_{n, \lambda}$ in Lemma~\ref{L: main}.

\section{Finding generators of $\leading_<(I_{n, \lambda, \infty})$ via container diagrams}

We will find a finite set
of monomials $B_{n, \lambda}$
that generates the same
ideal as $A_{n,\lambda, \infty}^C$.
First, we recall some combinatorics construction developed 
by Rhoades, Yu and Zhao~\cite{RYZ}.

\label{S: container}
\subsection{Container diagram and coinversion code}
The \definition{Young diagram} of a partition $\lambda$
is a diagram consisting of boxes: 
row $i$ of the diagram has a box at column 
$1, 2, \cdots, \lambda_i$.
We use the convention that row $1$ is the topmost row
and column $1$ is the leftmost column.
Given $n$, $\lambda$, 
Rhoades, Yu and Zhao define the \definition{container
diagram}:
This is the Young diagram of $\lambda'$ together with numbers of $[n]$
filled.
Each number can either be filled in a box
of the Young diagram
or just float somewhere above the Young diagram
in a column.
Numbers that are not in boxes are called 
the \definition{floating numbers}.
The filling should satisfy all of the following. 
\begin{enumerate}
\item Every number in $[n]$ appears exactly once.
\item Every box can contain at most one number.
\item An empty box cannot have any number above it in its column.
\item Numbers are decreasing from top to bottom
in each column.
\end{enumerate}

\begin{exa}
\label{E: container}
Let $n = 11$ and $\lambda = (3,2,2,1)$.
Then following would be a container diagram
of $(n, \lambda)$:
$$
\sigma \: = \: \raisebox{1cm}{
\begin{ytableau}
\none & \none & \none[10] & \none & \none \cr
\none[11] & \none & \none[7] & \none & \none[5] \cr
4 &  & 6 & 9 \cr
3 & 8 & 1  \cr
2 \cr
\end{ytableau}}
$$
\end{exa}

\begin{rem}
Our convention of the container diagram is slightly different from 
the convention in~\cite{RYZ}.
First, we allow empty boxes to appear in the container diagram. 
Second, we do not require $n \geq |\lambda|$.
\end{rem}

Rhoades, Yu and Zhao 
use container diagrams to represent 
\definition{ordered set partitions},
an ordered infinite sequence of pair-wise disjoint
sets such that the union is $[n]$,
where each part is allowed to be $\emptyset$. 
Naturally, each container diagram of 
$(n, \lambda)$ corresponds to 
an ordered set partition of $[n]$.
For instance, the diagram in the previous
example represents:
$$(\{2,3,4,11\}, \{8\}, \{1,6,7,10\}, \{9\}, \{5\}, \emptyset, \cdots)$$
where all trailing sets are empty.
For this reason, we denote the set of 
container diagrams for $(n, \lambda)$ as $\OP_{n, \lambda}$.

Rhoades Yu and Zhao define the following
map from $\OP_{n, \lambda}$ to $\mathbb{Z}_{\geq 0}^n$,
the set of weak compositions with length $n$.
Notice that they only 
define the map on container diagrams whose
boxes are all filled. 
We can simply extend their definition by
imagining $\infty$ is filled in all empty boxes. 
\begin{defn}\cite{RYZ}
Let $\sigma$ be an container diagram of $(n, \lambda)$.
We define $\code(\sigma)$ 
as a weak compositions of length $n$.
Its $i\textsuperscript{th}$ entry 
can be found as follows. 
\begin{itemize}
\item Say the number $i$ lives in a box in $\sigma$.
Assume $i$ is in row $r$.
Consider the boxes that are in row $r$
to the right of $i$ or in row $r+1$ to the left of $i$.
Then $\code(\sigma)_i$ is the number
of boxes in this region 
that contain a number larger than $i$.

\item Otherwise, $i$ is floating.
Assume $i$ is in column $c$. 
Then $\code(\sigma)_i$ is $c-1$ plus
the number of boxes in row 1 to the right of column $c$ that contain a number larger than $i$.
\end{itemize}
Remember that when computing $\code(\sigma)$,
we imagine all empty boxes of the Yound diagram
contains $\infty$.
We call $\code(\sigma)$ the \definition{coinversion code}
of $\sigma$.
\end{defn}

If $\sigma$ is the container diagram in Example~\ref{E: container},
then 
$$
\code(\sigma) = (1,0,1,3,4,2,3,0,0,2,1).
$$

It turns out that $\code$ is a bijection
from $\OP_{n, \lambda}$ to $\mathbb{Z}_{\geq 0}^n$.
An inverse of the map is given 
in the proof of~\cite[Theorem 3.6]{RYZ}.
We briefly describe this inverse $\code^{-1}$. 
Given weak composition $\alpha$ with length $n$,
we compute $\sigma = \code^{-1}(\alpha) \in \OP_{n,\lambda}$
via the following algorithm. 
We start from the Young diagram of $\lambda'$ and insert
$1, 2, \dots, n$ iteratively.
Suppose we have inserted $1, \dots, t-1$
and we want to insert $t$.
For each column, 
$t$ can only go to the bottom-most available spot within the column. 
We determine which column $t$ should go to
based on the value of $\alpha_t$.
First, arrange the positive integers $1, 2, 3, \cdots$ 
in the following way.
\begin{itemize}
\item Suppose column $i$ has an empty box but
column $j$ does not. Then $i$ comes before $j$.
\item  Suppose column $i$ and column $j$
have no empty boxes. 
Then $i$ comes before $j$
if $i < j$.
\item Suppose column $i$ and column $j$
both have empty boxes.
Say the bottom-most empty box in column $i$ (resp. $j$)
is at row $r_i$ (resp. $r_j$).
Then $i$ comes before $j$
if $r_i > r_j$ or $r_i = r_j$ and $j < i$.
\end{itemize}
Let $c_1, c_2, c_3, \cdots$ be the resulting rearrangement
of $1, 2, 3, \cdots$.
Readers may check that if we 
put $t$ at column $c_p$, 
the $t^\textsuperscript{th}$ entry in the 
coinversion code of the resulting container diagram
must be $p - 1$.
Thus, we insert $t$ into column
$c_{\alpha_t + 1}$.
We obtain $\code^{-1}(\alpha)$ after inserting
$1, 2, \dots, n$.

\begin{exa}
We present one step in computing $\code^{-1}((1,0,1,3,4,2,3,0,0,2,1))
$
with $n = 11$ and $\lambda = (3,2,2,1)$.
After inserting $1$ and $2$ to the empty Young diagram of $\lambda'$,
we have 
$$
\begin{ytableau}
\: & \: & \: & \: \cr
3 & \: & 1\cr
2 \cr
\end{ytableau}
$$
Now we insert $4$.
By the algorithm, we may 
arrange the positive numbers into 
$(c_1, c_2, \cdots) = (2,4,3,1,5,6,7,\cdots)$.
Putting the number $4$
at the bottom of column 
$c_t$ will yield a container diagram
whose coinversion code has $t+1$ as 
the $4\textsuperscript{th}$
entry.
Since we want to have $3$
as the $4\textsuperscript{th}$
entry,
we put $4$ at the bottom of
column $c_{3 + 1} = 1$.
After inserting all numbers in $[11]$,
we obtain the container diagram in Example~\ref{E: container}.

\end{exa}

\begin{rem}
\label{R: Insert 0}
We make one simple observation about the
insertion that will be used later.
Suppose we are computing $\code^{-1}(\alpha)$
and we are about insert $t$.
If $\alpha_t = 0$ and there is at least one
empty box in the diagram, 
we will put $t$ in a box. 
\end{rem}

One use of the container diagram and the coinversion code is to identify
whether a weak composition lies in $\C_{n, \lambda, \infty}$.
Rhoades, Yu and Zhao describe its image
under $\code^{-1}(\cdot)$.
\begin{thm}\cite[Theorem 3.6]{RYZ}
\label{T: container no empty}
Consider $(n, \lambda)$.
The map $\code(\cdot)$ and $\code^{-1}(\cdot)$ restrict to a bijections
between $\C_{n, \lambda, \infty} \subseteq \mathbb{Z}_{\geq 0}^n$ and 
$$\{ \sigma \in \OP_{n, \lambda}: 
\sigma \textrm{ has no empty boxes in the Young diagram}\} \subseteq \OP_{n, \lambda}.$$
\end{thm}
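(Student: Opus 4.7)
The plan is to analyze the insertion algorithm for $\code^{-1}$ and to show that the $|\lambda|$ boxes of the Young diagram of $\lambda'$ all get filled exactly when the input weak composition lies in $\C_{n,\lambda,\infty}$. When $|\lambda|>n$ both sides are empty by the conventions in the preceding remark, so it suffices to treat $|\lambda|\leq n$; here the diagram has exactly $|\lambda|$ boxes, so ``no empty boxes'' is equivalent to exactly $|\lambda|$ of the integers $1,\dots,n$ being placed in boxes (rather than floating) during the insertion. Since $\code$ is already a bijection between $\OP_{n,\lambda}$ and $\mathbb{Z}_{\geq 0}^n$, it suffices to verify these two equivalent conditions cut out the same subsets on either side.

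For the forward direction, take $\alpha\in\C_{n,\lambda,\infty}$, choose $s$ sufficiently large, and fix an $(n,\lambda,s)$-staircase $\gamma$ with $\alpha\leq\gamma$ entrywise. The shuffle description of $\gamma$ labels each position $t\in[n]$ as either ``box-type'' (originating in some $\rho^{\lambda'_{j(t)}}$) or ``float-type'' (originating in $(s-1)^{n-|\lambda|}$). I would maintain through the insertion the invariant that, at step $t$, the column ordering $c_1,c_2,\dots$ lists the unfilled columns in a manner compatible with the remaining shuffle; the bound $\alpha_t\leq\gamma_t$ then forces $t$ into column $j(t)$ whenever $t$ is box-type, because $\gamma_t$ equals the number of columns appearing strictly before $j(t)$ in the ordering, while imposing no real constraint when $t$ is float-type.

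Conversely, given $\sigma$ with no empty boxes, I would construct a witness staircase $\gamma$ directly from $\sigma$: for each $t$ in a box at column $j$, row $r$ (counted from the bottom of column $j$), set $\gamma_t:=\lambda'_j-r$, and for each floating $t$ set $\gamma_t:=s-1$ for $s$ large. The rule that numbers decrease from top to bottom within each column forces the $\gamma_t$ assigned within column $j$ to be a permutation of $\{0,1,\dots,\lambda'_j-1\}$, so $\gamma$ is a legitimate shuffle of $\rho^{\lambda'_1},\dots,\rho^{\lambda'_{\lambda_1}},(s-1)^{n-|\lambda|}$; the inequality $\code(\sigma)_t\leq\gamma_t$ then follows by reading off the position of column $j$ in the ordering $c_1,c_2,\dots$ at the moment $t$ was inserted.

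The main obstacle I anticipate is synchronizing the column ordering $c_1,c_2,\dots$ with the chosen shuffle when several columns share the same empty-box depth, so that the tie-breaking rule (larger column index first among equal depths) is consistent with how the shuffle distributes the $\rho^{\lambda'_j}$'s. I expect the cleanest way to handle this is to designate a canonical shuffle attached to $\sigma$ that respects the tie-breaking rule, and then to show that any shuffle witnessing $\alpha\leq\gamma$ in the forward direction can be rearranged into this canonical form without violating the entrywise bound. Once this compatibility is settled, both containments follow by a straightforward induction on the step $t$ of the insertion.
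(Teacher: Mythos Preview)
The paper does not prove this theorem; it is quoted from \cite{RYZ}, and the only argument supplied in the present paper is the one-line observation that when $|\lambda|>n$ both sides are empty. So there is nothing to compare your proposal against here.

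That said, your sketch contains a substantive slip worth flagging. In the backward direction you assign to $t$ in column $j$, at height $r$ from the bottom, the value $\gamma_t=\lambda'_j-r$, and then claim this yields a shuffle of $\rho^{\lambda'_1},\dots,\rho^{\lambda'_{\lambda_1}}$. But in the container diagram (which uses the Young diagram of $\lambda'$) column $j$ has $\lambda_j$ boxes, not $\lambda'_j$; the staircases $\rho^{\lambda'_i}$ are indexed by the \emph{rows} of the diagram, each of length $\lambda'_i$, not by the columns. So your column-based labelling does not produce the right pieces to shuffle. In the forward direction, the assertion that $\alpha_t\leq\gamma_t$ ``forces $t$ into column $j(t)$'' is also too strong: the bound only guarantees $t$ lands in one of the first $\gamma_t+1$ columns of the current ordering, which need not be $j(t)$ unless $\alpha_t=\gamma_t$. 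What one actually needs is the weaker conclusion that enough of the insertions land in boxes, and you correctly anticipate that the delicate point is keeping the dynamic column ordering in step with the shuffle data; the argument in \cite{RYZ} handles this, but reproducing it would require more than the invariant you describe.
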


Technically, Rhoades, Yu
and Zhao only prove this theorem when $n \geq |\lambda|$.
When $n < |\lambda|$, notice that every container diagram 
in $\OP_{n, \lambda}$ must have a empty box.
Recall that we set $\C_{n, \lambda,\infty} = \emptyset$,
so this theorem holds trivially. 

\subsection{Finding a finite generating set
for $\langle \A_{n, \lambda, \infty}^C \rangle$}

In this subsection, 
we use container diagrams to describe a set of monomials $\B_{n, \lambda}$
that can generate the same ideal as $\A_{n, \lambda, \infty}^C$.
Notice that this process is immediate when 
$|\lambda| > n$,
in which case $\A_{n, \lambda, \infty} = \emptyset$.
We only need to worry about the case when
$|\lambda|\leq n$.
However, we need to make use of a recursive 
structure of $\B_{n, \lambda}$ later,
so we choose to focus on $(n, \lambda)$
such that $1 \leq |\lambda| \leq n+1$.
Now, fix such $(n, \lambda)$ throughout
this subsection.

By Theorem~\ref{T: container no empty},
we know $\A_{n, \lambda, \infty}^C$ can be written as
$$
\{ x^{\code(\sigma)}: \sigma \in \OP_{n, \lambda, s} \textrm{ with an empty box}\}.
$$

Let $\C^C_{n, \lambda,\infty}$ be the set of weak compositions
that are not in $\C_{n, \lambda,\infty}$.
We describe a set 
$\D_{n, \lambda} \subseteq \C^C_{n, \lambda,\infty}$,
Then we check
for any $\alpha$
in $\C^C_{n, \lambda,\infty}$,
we can find $\beta \in \D_{n, \lambda}$
such that $\beta \leq_{e} \alpha$.
Here, $\leq_e$ denotes entry-wise less than or equal to.
Finally, we set
$$\B_{n, \lambda}:=\{x^\beta: \beta \in \D_{n, \lambda}\}.$$
Clearly, $\B_{n,\lambda}$ generates the same ideal 
as $\A_{n, \lambda, \infty}^C$.
Now,
we define $\D_{n, \lambda}$
and check it has the desired properties. 

\begin{defn}
\label{D: D}
Let $\D_{n,\lambda}$
be the set of $\code(\sigma)$ 
where $\sigma$ ranges over elements in $\OP_{n, \lambda}$
satisfying all of the following three conditions. 
\begin{itemize}
\item Condition 1: The Young diagram in $\sigma$
has exactly one empty box. 
\item Condition 2: The filling of the Young diagram is 
decreasing in each row from left to right.
Moreover, the empty box in the Young diagram is at row 1 column 1.
\item Condition 3: If a number $i$ is floating, 
then every column on its left has a box that is empty
of contains a number larger than $i$.
\end{itemize}
\end{defn}

\begin{exa}
\label{E: 3 conditions}
Consider the $(n, \lambda)$ and the
container diagram $\sigma$ in Example~\ref{E: container}.
Clearly, $\sigma$ satisfies condition 1.
However, $\sigma$ does not satisfy condition 2,
since the only box is not at column $1$. 
Moreover, the rows are not decreasing from left to right.
It does not satisfy condition 3 either,
since $5$ is floating but the $4$ in column $1$ row $1$
is less than $5$.
Here is container diagram $\gamma$ that 
satisfies all three conditions for the same $(n, \lambda)$:
$$
\gamma = 
\raisebox{1cm}{
\begin{ytableau}
\none & \none[11] & \none & \none & \none \cr
\none & \none[10] & \none[7] & \none[5] & \none \cr
  & 9 & 6 & 4 \cr
8 & 3 & 1  \cr
2 \cr
\end{ytableau}}
$$

Thus, $\code(\gamma) = (1,0,0,1,3,1,2,0,0,1,1)$ is in $\D_{n, \lambda}$.
\end{exa}

Then we check $\D_{n,\lambda}$ has the 
desired property.
\begin{prop}
\label{P: D}
For any $\alpha \in \C^C_{n, \lambda, \infty}$, 
we can find $\beta \in \D_{n,\lambda}$
with $\beta \leq_e \alpha$.
Thus, $\langle B_{n, \lambda} \rangle = \langle A_{n, \lambda, \infty}^C\rangle$.
\end{prop}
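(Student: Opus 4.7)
The plan is, given $\alpha \in \C^C_{n,\lambda,\infty}$, to exhibit $\gamma \in \OP_{n,\lambda}$ satisfying the three conditions of Definition~\ref{D: D} whose coinversion code $\beta := \code(\gamma)$ is dominated by $\alpha$ entry-wise. Then $\beta \in \D_{n, \lambda}$ is the required element, and the equality $\langle \B_{n, \lambda} \rangle = \langle \A^C_{n, \lambda, \infty} \rangle$ then follows from this dominance together with the containment $\D_{n,\lambda} \subseteq \C^C_{n,\lambda,\infty}$ built into Definition~\ref{D: D}. By Theorem~\ref{T: container no empty}, $\sigma := \code^{-1}(\alpha)$ has at least one empty box in its Young diagram, which provides the natural starting point.

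I would construct $\gamma$ from $\sigma$ through a sequence of local modifications, each chosen so as to preserve the container-diagram axioms and to weakly decrease every coordinate of the coinversion code. The modifications are: (i) demoting selected floating numbers into extra empty boxes so that only one empty box remains, establishing Condition~1; (ii) shifting that empty box to position $(1,1)$ by swaps with neighboring filled entries, contributing to Condition~2; (iii) sorting the filled entries of each row into decreasing order by adjacent transpositions, coupled with compensating adjustments in other rows to preserve column-decreasingness, completing Condition~2; and (iv) relocating any floating number that violates Condition~3 to a smaller column. For each modification I would carry out a case analysis on the position of each $i \in [n]$ and its relation to the modified cells, verifying that $\code(\cdot)_i$ never increases.

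The hard part is step (iii): an adjacent row-swap can both violate the column-decreasing axiom of a container diagram and increase $\code(\cdot)_i$ for a number in the row below, through the ``row $r+1$ left of $c$'' contribution to the code formula. To handle this, one must couple each row swap with a carefully chosen adjustment in the adjacent row, or alternatively demote one of the swapped entries to floating status; the exact coordination has to be arranged so that neither an axiom violation nor a code increase occurs. Once the four stages are consistently orchestrated, the final $\gamma$ lies in $\OP_{n, \lambda}$, satisfies Conditions~1,~2, and~3, and has $\code(\gamma) \leq_e \alpha$, proving the proposition.
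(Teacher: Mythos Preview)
Your high-level strategy---translate to container diagrams via Theorem~\ref{T: container no empty} and then modify $\sigma$ through local moves until it satisfies Conditions~1--3 while the coinversion code weakly decreases---is exactly the paper's approach, and your step~(iv) matches the paper's Lemma~\ref{L: Step 3} verbatim. However, step~(i) as stated does not work. The obstruction is that the available floating numbers may all be too small to enter any remaining empty box without violating column-decreasingness. For a concrete failure take $n=3$, $\lambda=(2,2)$, and $\alpha=(2,0,0)$: then $\sigma=\code^{-1}(\alpha)$ has row~$2$ equal to $3,2$, row~$1$ entirely empty, and the number~$1$ floating in column~$3$; there is no box into which $1$ can be demoted. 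The paper sidesteps this by working on the code side rather than the diagram side: one locates the first moment $N$ in the insertion algorithm for $\code^{-1}(\alpha)$ at which $(\text{empty boxes})-(\text{uninserted numbers})=1$, and replaces $\alpha$ by the composition $\beta$ obtained by zeroing out $\alpha_N,\dots,\alpha_n$. By Remark~\ref{R: Insert 0} the remaining insertions all land in boxes, leaving exactly one empty, and $\beta\le_e\alpha$ is automatic.

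Your step~(ii) has the same pitfall: swapping the empty box with its left neighbour in row~$1$ can break column-decreasingness in the column receiving the finite entry (e.g.\ row~$1$ equal to $2,\infty$ over row~$2$ equal to $1,3$). The paper avoids separating (ii) from (iii): treating the empty box as $\infty$, the requirement ``empty box at $(1,1)$'' becomes a special case of ``rows decreasing,'' and both are achieved simultaneously by exactly the coupled move you anticipate in~(iii). One locates adjacent entries $l_0<r_0$ in the lowest offending row, finds the maximal $i$ with $l_j<r_j$ for all $0\le j\le i$, swaps that entire vertical segment of the two columns at once, and (when the segment reaches row~$1$) redistributes the affected floating numbers. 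The segment-swap is precisely what keeps both columns decreasing, and one checks that $\code(\cdot)_j$ drops by $1$ for $j\in\{l_0,\dots,l_i\}$ and is unchanged otherwise. So your instinct about coupling is correct; the missing ingredients are to apply it to~(ii) as well and to replace the demotion in~(i) by the zero-out trick on the code.
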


\begin{exa}
Consider the $(n, \lambda)$ and the
container diagram $\sigma$ in Example~\ref{E: container}.
Let $\alpha = \code(\sigma)$.
Since $\sigma$ contains an empty box,
we know $\alpha \in \C^C_{n, \lambda, \infty}$.
Then $\beta$ can be $\code(\gamma)$ where $\gamma$ is
the container diagram in Example~\ref{E: 3 conditions}.
We have $\beta \in \D_{n, \lambda}$
and $\beta \leq_e \alpha$.
\end{exa}

\begin{rem}
It is important to note that
$\D_{n,\lambda}$
is not the smallest subset that
satisfies Proposition~\ref{P: D}.
In other words, 
two distinct elements $\alpha$ and $\beta$
in $\D_{n,\lambda}$ 
may satisfy $\alpha \leq_e \beta$.
Readers may check 
when $n = 4$, $\lambda = (3,1)$,
$\D_{n, \lambda}$
contains both $(0,0,0,1)$
and $(1,0,0,1)$.
Thus, 
if our Gr\"{o}bner basis
contains a polynomial of leading monomial $x^\alpha$
for each $\alpha \in \D_{n, \lambda}$,
it cannot be the reduced Gr\"{o}bner basis.
It would be a good problem to find a new definition
of $\D_{n,\lambda}$
such that Proposition~\ref{P: D} is satisfied
and $\D_{n,\lambda}$ is minimal. 
\end{rem}

Now we prove Proposition~\ref{P: D}.
The idea is to turn the proposition into an equivalent
statement regarding $\OP_{n,\lambda}$.
\begin{proof}[Proof of Proposition~\ref{P: D}]
Recall that $\code(\cdot)$ is a bijection from 
$\OP_{n, \lambda}$ to $\mathbb{Z}_{\geq 0}^n$.
By Theorem~\ref{T: container no empty},
it is enough to show the following claim:

\noindent\textbf{Claim:} 
For any $\sigma \in \OP_{n, \lambda}$
with at least one empty box,
we can find $\gamma \in \OP_{n, \lambda}$
satisfying the three conditions in the Definition~\ref{D: D}
such that 
$\code(\gamma) \leq_e \code(\sigma)$.

Our approach to show the claim is straightforward, 
consisting of the following three lemmas. 
It is clear that these three lemmas
would imply our claim,
hence finishing the proof. 
\end{proof}

\begin{lemma}
\label{L: Step 1}
 For any $\sigma \in \OP_{n, \lambda}$
with at least one empty box,
we can find $\gamma \in \OP_{n, \lambda}$
satisfying Condition 1
such that $\code(\gamma) \leq_e \code(\sigma)$.    
\end{lemma}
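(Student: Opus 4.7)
The plan is to argue by induction on the number $k$ of empty boxes of $\sigma$. In the base case $k = 1$, the conclusion is trivial: take $\gamma = \sigma$. For the inductive step $k \geq 2$, the aim is to produce an intermediate container diagram $\sigma^{(1)} \in \OP_{n, \lambda}$ with exactly $k - 1$ empty boxes and $\code(\sigma^{(1)}) \leq_e \code(\sigma)$, then invoke the induction hypothesis on $\sigma^{(1)}$ to obtain $\gamma$.

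The reduction from $\sigma$ to $\sigma^{(1)}$ I would carry out by a single ``fill move'': pick a floating number $f$ in $\sigma$ and an empty box $e = (r, c)$, and relocate $f$ into $e$. The crucial observation is that the implicit value at $(r, c)$ changes from $\infty$ (empty) to the finite value $f$, so that for every $j \neq f$ the count of entries greater than $j$ in the region defining $\code_j$ can only decrease; hence $\code_j(\sigma^{(1)}) \leq \code_j(\sigma)$ is automatic. The remaining task is therefore to choose $(f, e)$ so that (i) the move yields a valid container diagram---which after checking the rules of the diagram reduces to the requirement that $f$ exceed the entry in box $(r + 1, c)$ whenever that box exists and is filled---and (ii) $\code_f$ itself does not increase. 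A natural attempt is to take $e$ the bottom-most empty box (maximizing $r$, then $c$) and $f$ a suitable float: for instance, the largest floating number, or the smallest float whose value exceeds the filled entry directly below $e$; a direct comparison of the two defining sums then gives the required bound on $\code_f$.

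The main obstacle I foresee is the case where every empty box sits directly above a filled entry larger than every floating number; then no single fill-move meeting both (i) and (ii) exists. In this situation, one must perform a compound operation---for example, first bubble a large filled entry up into a higher empty slot and then slide a float into its vacated position---and verify that the net effect on $\code$ is still $\leq_e$. An equivalent alternative is to work through the inverse code map by decreasing several entries of $\alpha = \code(\sigma)$ in the reconstruction algorithm, but there one must manage a cascading effect in which forcing one entry to fill a box may cause a later entry (originally filling a box) to become floating instead. Resolving this case analysis is, I expect, the essential technical content of the proof.
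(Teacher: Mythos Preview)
Your primary approach (inductively filling one empty box at a time via a direct ``fill move'') is genuinely different from the paper's, and the obstacle you identify is real: your own example-in-spirit already occurs, e.g.\ for $n=4$, $\lambda=(2,2)$ with boxes $(2,1)=4$, $(2,2)=3$, both row-$1$ boxes empty, and $1,2$ floating in columns $\geq 3$. No single fill move is legal there, and you do not actually resolve this case---the ``compound operation'' and the code-map alternative are left as open-ended sketches. As written, the argument has a gap precisely at this point.

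The paper's proof sidesteps the obstacle entirely by working only on the code side. Set $\alpha=\code(\sigma)$ and run the insertion algorithm for $\code^{-1}(\alpha)$ while tracking the quantity
\[
(\text{number of empty boxes}) - (\text{number of elements not yet inserted}).
\]
It starts at $|\lambda|-n\leq 1$, ends $>1$, and changes by $0$ or $1$ each step, so there is some $N$ with the quantity equal to $1$ right before inserting $N$. Now define $\beta$ by keeping $\alpha_i$ for $i<N$ and setting $\beta_i=0$ for all $i\geq N$. The first $N-1$ insertions for $\code^{-1}(\beta)$ are identical to those for $\code^{-1}(\alpha)$; from step $N$ on, every remaining entry is $0$ and there is always at least one empty box, so (by the observation you already have available, the paper's Remark on inserting a $0$) each of $N,\ldots,n$ lands in a box. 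Since at step $N$ there were exactly $(n-N+1)+1$ empty boxes, exactly one remains. The cascading effect you worry about never arises, because the paper zeroes out the \emph{entire} tail rather than selectively lowering a few entries; no later entry that used to fill a box can be displaced, since its new value $0$ forces it into a box regardless. This single counting trick replaces all of the delicate box-by-box case analysis in your plan.
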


\begin{lemma}
\label{L: Step 2}
For any $\sigma \in \OP_{n, \lambda}$
satisfying condition 1,
we can find $\gamma \in \OP_{n, \lambda}$
satisfying condition 1 and 2
such that $\code(\gamma) \leq_e \code(\sigma)$.
\end{lemma}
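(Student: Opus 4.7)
The plan is to define $\gamma$ by sorting the entries in each row of $\sigma$'s Young diagram in decreasing order from left to right, treating the unique empty box as the value $\infty$. Since $\infty$ is the largest possible entry, this operation automatically places the empty box at position $(1,1)$, and it makes every row decreasing, so condition~2 holds.

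First I would prove a domination lemma ensuring that the column-decreasing property of a container diagram is preserved by row-sorting: if two consecutive rows $r$ and $r+1$ of $\sigma$ satisfy $\sigma_{r,c} > \sigma_{r+1,c}$ for every valid column $c$, then after sorting each row decreasingly the $c$-th largest entry of row $r$ still strictly exceeds the $c$-th largest of row $r+1$. This follows from a standard bubble-sort style pairing argument. Next I would reconcile the floating numbers with the new arrangement. Any floating number in column $1$ of $\sigma$ can no longer live there, since $(1,1)$ is now the empty box, and the row-$1$ sort may have changed the topmost boxed entry in some column, potentially invalidating an originally legal floating number. To fix this I would run a cascade that swaps each offending floating number with the boxed entry at the top of its column (converting the former boxed entry into a new floating number) and then reassigns each newly displaced floating number to a legal column, in particular to $c^*$, the column of the original empty box of $\sigma$. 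A monovariant argument (for instance, the sum of the top boxed entries across columns strictly decreases) then shows that this cascade terminates and produces a valid element of $\OP_{n,\lambda}$ satisfying conditions~1 and~2.

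The final and most delicate step is verifying $\code(\gamma) \leq_e \code(\sigma)$ entrywise. For a boxed entry $i$ in $\gamma$ the within-row contribution to the right of $i$ vanishes because the row is decreasing, and the row-below-left contribution can be bounded by the corresponding quantity in $\sigma$ via the domination of sorted rows established above. For a floating entry the analysis is more subtle: moving a floating number to a larger-indexed column a priori increases its base $c-1$ offset, but in each swap the accompanying transformation converts a boxed entry into a floating one with compensating code contributions. The main obstacle will be controlling the net code change through the cascade, so the proof will require a careful case analysis depending on whether each number is boxed or floating in $\sigma$ versus in $\gamma$, together with exploiting the monotonicity of the sorted rows to bound the new row-$1$-right and column-offset contributions.
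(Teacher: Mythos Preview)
Your global row-sort is a natural first move, and for boxed entries it actually works: using the domination of sorted rows one can check that every boxed $i$ satisfies $\code(\gamma)_i\le\code(\sigma)_i$. The genuine gap is in your handling of floating numbers, where the proposed repair scheme fails concretely.

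For the rule ``send displaced floaters to $c^*$'': take $n=3$, $\lambda=(1,1,1)$, so $\lambda'=(3)$ and the Young diagram is a single row of three boxes. Let $\sigma$ have $1$ in box $(1,1)$, $2$ in box $(1,2)$, box $(1,3)$ empty, and $3$ floating above column $1$. Then $\code(\sigma)=(2,1,1)$. Sorting row $1$ gives boxes $(\infty,2,1)$; the floating $3$ must leave column $1$, and your rule sends it to $c^*=3$. The resulting diagram is valid but has code $(0,0,2)$, so $\code(\gamma)_3=2>1=\code(\sigma)_3$ and the entrywise inequality fails. (The correct destination for $3$ here is column $2$.)

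The cascade you propose for a floater that is now smaller than the top boxed entry of its column is also problematic. Swapping such a floater down into that box will in general destroy the row-decreasing order you just created (the inserted value may be smaller than the box to its right), and nothing guarantees it dominates the entry in row $2$ below it either. So after the cascade you need not have a diagram satisfying condition $2$, contrary to your claim, and the monovariant you suggest does not repair this.

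The paper avoids both difficulties by working locally rather than globally: it locates one adjacent pair of columns violating condition $2$, swaps a contiguous vertical block $l_0,\dots,l_i$ with $r_0,\dots,r_i$ between them, and carries along precisely those floating numbers that are forced to move. Each such step reduces the number of row violations and changes the code in a completely explicit way --- the entries indexed by $l_0,\dots,l_i$ drop by exactly $1$ and every other code entry, including every floating one, stays fixed. That clean bookkeeping is exactly what your sort-then-repair approach cannot provide without a much more careful placement rule for displaced floaters.
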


\begin{lemma}
\label{L: Step 3}
For any $\sigma \in \OP_{n, \lambda}$
satisfying condition 1 and 2,
we can find $\gamma \in \OP_{n, \lambda}$
satisfying condition 1, 2 and 3 
such that $\code(\gamma) \leq_e \code(\sigma)$.
\end{lemma}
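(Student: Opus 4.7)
The plan is to fix Condition 3 violations one floating entry at a time: whenever a floating number $i$ in some column $c$ violates Condition 3, I would relocate it to the smallest column where it can sit safely, and verify that this local move preserves Conditions 1 and 2, achieves Condition 3 at $i$, and strictly decreases $\code(\cdot)_i$ while leaving every other entry of $\code$ unchanged.

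The structural observation I would rely on is that Conditions 1 and 2 pin down the row-1 entries of the Young diagram tightly. With the empty box at $(1,1)$ and row 1 strictly decreasing, write $a_2 > a_3 > \cdots > a_{\lambda'_1}$ for those row-1 entries. For any floating $i$ above column $c$, the requirement $i > a_c$ (in the case $c \leq \lambda'_1$) combined with the decreasing $a$-row forces every row-1 entry right of $c$ to be smaller than $i$, so $\code(\sigma)_i = c - 1$, independent of how $i$ is stacked vertically with other floating entries in its column. Given a violating $i$ in column $c$, I would take $c^*$ to be the smallest integer $\geq 2$ with either $c^* > \lambda'_1$ or $a_{c^*} < i$. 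The violation forces $c^* < c$, and minimality of $c^*$ yields $a_{c''} > i$ for every $2 \leq c'' < c^*$ with $c'' \leq \lambda'_1$. Then define $\sigma'$ by removing $i$ from its current floating position and reinserting it as a floating number in column $c^*$, at the unique slot that keeps that column decreasing from top to bottom (using $i > a_{c^*}$ when $c^* \leq \lambda'_1$). The Young diagram has not been altered, so Conditions 1 and 2 persist, and Condition 3 now holds at $i$ by the choice of $c^*$.

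The main thing to verify is that no other entry of $\code$ rises. Every box entry still sees the same filled boxes in the region relevant to its own code, so its code is unchanged; every other floating $j$ retains its column and its row-1 environment, so its code is unchanged; and $\code(\sigma')_i = c^* - 1 < c - 1 = \code(\sigma)_i$. Iterating this move strictly decreases the (bounded-below) sum of columns taken over all floating entries, so the process terminates at some $\gamma \in \OP_{n,\lambda}$ satisfying Conditions 1, 2, and 3 with $\code(\gamma) \leq_e \code(\sigma)$. The delicate point that makes the whole strategy work is precisely that we only rearrange floating entries and never touch a box: this invariance of the Young diagram is what guarantees that repairing one violation can neither undo an earlier repair nor inflate anyone else's code, and it is the observation I expect to be the main thing to emphasize in the write-up.
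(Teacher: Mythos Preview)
Your argument is correct and follows the same idea as the paper's proof: relocate each floating number $i$ that violates Condition~3 to an earlier column where it can legally float, observe that this leaves the boxed entries untouched (so Conditions~1 and~2 persist and all other code entries are unchanged), and iterate. The only cosmetic difference is that you move $i$ all the way to the optimal column $c^*$ in one step, whereas the paper simply moves $i$ to \emph{some} witnessing column on its left and iterates; both terminate for the same reason.
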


Now we prove these three Lemmas.
The proofs are all constructive: 
we give an algorithm to construct the $\gamma$
from $\sigma$ in each proof. 

\begin{proof}[Proof of Lemma~\ref{L: Step 1}]
If $\sigma$ has exactly one empty box, 
we are done by setting $\gamma = \sigma$.
Now assume $\sigma$ has more than one box. 
Let $\alpha = \code(\sigma)$
and consider the insertion algorithm that computes
$\code^{-1}(\alpha)$.
We focus on the quantity: 
$$
\textrm{number of empty boxes} \quad - \quad \textrm{number of elements in $[n]$
that have not been inserted}.
$$
Recall that we assume $|\lambda| \leq n+1$
throughout this subsection.
Thus, this quantity was $|\lambda| - n \leq 1$
when the algorithms starts,
and becomes larger than $1$
when the algorithm ends.
Moreover, each iteration can only 
increase this quantity by $0$ or $1$.
Thus, we can find $N < n$ such that right before inserting $N$,
this quantity is $1$.
At this moment, 
there are $n - N + 1$ numbers waiting
to be inserted. 
Thus,
there are $n - N + 2$
empty boxes in diagram.
We obtain weak composition $\beta$ 
from $\alpha$
by setting $\beta_i = \alpha$ if $i < N$
and $\beta_i = 0$ if $i \geq N$.
Clearly, $\beta \leq_e \alpha$.
When we compute $\code^{-1}(\beta)$,
the first $N - 1$ iterations behave the same 
as computing $\code^{-1}(\alpha)$.
By Remark~\ref{R: Insert 0},
the algorithm will insert $N, N+1, \cdots, n$
into boxes, leaving exactly one empty box at the end.
Thus, $\code^{-1}(\beta)$ is the $\gamma$ we want.
\end{proof}

\begin{proof}[Proof of Lemma~\ref{L: Step 2}]
In this proof, we imagine the empty spaces of a container diagram are filled with $\infty$.
Define the row inversion of a container diagram to be a pair of boxes on the same row
with the box on the left containing 
a larger number than the box on the right.
With this convention and definition, condition 2 can be rephrased as: ``There are no row inversions.''

Now suppose $\sigma \in \OP_{n,\lambda}$
satisfies condition 1 and 
has at least one row inversion.
We just need to find $\sigma' \in \OP_{n, \lambda}$ satisfying
condition 1 and having less row inversions
than $\sigma$.
Moreover, we need to make sure
$\code(\sigma') \leq_e \code(\sigma)$.

Look at the bottom-most row of boxes in $\sigma$ where a row inversion appears. 
We can find two adjacent boxes in this row
containing $l_0, r_0$ with $l_0 < r_0$
and $l_0$ is on the left. 
(The name $l$ stands for ``left''
and $r$ stands for ``right''). 
Let $l_1 < l_2 < \cdots < l_m$ be the numbers 
in boxes above $l_0$.
Let $l_{-1}$ be the number immediately underneath $l_0$, 
if it exists. 
Define $r_{-1}, r_1, r_2, \cdots$ in the same way. 
These two columns of the boxes in $\sigma$
looks like: 

$$
\begin{ytableau}
l_m & r_m \cr
\none[\vdots] & \none[\vdots]\cr
l_1 & r_1 \cr
l_0 & r_0 \cr
l_{-1} & r_{-1} \cr
\none[\vdots] & \none[\vdots]\cr
\end{ytableau}
$$

Now we obtain $\sigma'$ by doing the following
operations to $\sigma$.
Find the largest $i \in [m]$
such that $l_{j} < r_j$
for $j = 0, 1, \dots, i$.
Then for $j = 0, 1, \dots, i$,
we swap $l_j$ and $r_j$.
If $i = m$,
we also look at the numbers that 
were floating above $l_m$ in $\sigma$.
We find the floating numbers that are 
less than $r_m$
and move them to float in the column
on the right.

First, we check $\sigma'$ is a valid 
container diagram. 
We just need to make sure the columns are 
strictly decreasing from top to bottom. 
\begin{itemize}
\item If $l_{-1}$ exists, 
we check it is smaller than the number above it in $\sigma'$,
which is $r_0$:
$r_0 > l_0 > l_{-1}$.
\item If $r_{-1}$ exists, since we pick the lowest row
in $\sigma$ with a row inversion, we know
$l_0 > l_{-1} > r_{-1}$.
Thus, $r_{-1}$ is smaller than the number above it in $\sigma'$.
\item If $i < m$,
we need to check $r_i$ and $l_i$
are smaller than the numbers above
them in $\sigma'$:
$l_{i+1} > r_{i+1} > r_i$ by the maximality of $i$
and $r_{i+1} > r_i > l_i$.
\item If $i = m$,
we need to check $r_m$ and $l_m$
are smaller than the numbers floating
above them in $\sigma'$.
If a number is floating above $r_m$,
by our construction,
it is larger than $r_m$.
If a number is floating above $l_m$,
there are two possibilities:
it was floating above $r_m$ or $l_m$
in $\sigma$.
In either case, we know it is larger
than $l_m$.
\end{itemize}

Finally, $\sigma'$ clearly has less row inversions than $\sigma$.
It remains to check $\code(\sigma') \leq_e \code(\sigma)$.
A routine analysis would yield:  $\code(\sigma')_j = \code(\sigma)_j - 1$
if $j = l_0, \cdots, l_i$, 
and $\code(\sigma')_j = \code(\sigma)_j$ otherwise. 
\end{proof}

\begin{proof}[Proof of Lemma~\ref{L: Step 2}]
Suppose the floating number $i$ in $\sigma$
does not satisfy condition $3$.
In other words, a column on the left of $j$ does
not have boxes that are empty 
or containing a number larger than $i$.
We simply ask $i$ to float in that column.
This operation clearly reduces the
$i\textsuperscript{th}$ entry 
of the coinversion code
and preserves condition 1 and 2. 
We may eliminate all violations of condition
3 by repeatedly applying this operation. 
\end{proof}

\subsection{Recursive description of $\D_{n, \lambda}$}
We end this section by observing a recursive
structure of $\D_{n, \lambda}$.
We need the following  
notation from Griffin's thesis: 
\begin{defn}
Let $\lambda$ be a partition.
We define $\lambda^{(j)}$ as the partition
such that the conjugate of $\lambda^{(j)}$ 
is obtained from $\lambda'$ 
by decreasing its $j^\textsuperscript{th}$
entry by $1$. 
By convention, we let $\lambda^{(0)} = \lambda$.
\end{defn}
Notice that $\lambda^{(j)}$ is not well-defined for all $j$. 
Clearly, $\lambda^{(j)}$
is well-defined if and only if the rightmost
cell in row $j$ of
the Young diagram of $\lambda'$ is the bottommost cell in its column. 

\begin{exa}
Let $\lambda = (3,2)$.
Then 
$$
\lambda^{(0)} = (3,2), \quad 
\lambda^{(2)} = (3,1), \quad
\lambda^{(3)} = (2,2), 
$$    
but $\lambda^{(1)}$
is not defined. 
\end{exa}

\begin{prop}
\label{P: recursive D}
If $1 \leq |\lambda| \leq n + 1$ and $n > 1$,
we can write $\D_{n,\lambda}$
recursively as
$$
\D_{n, \lambda} = \bigsqcup_{j} 
(\lambda_{j+1}' \circ \D_{n-1,\lambda^{(j)}}),
$$
where $j$ ranges over $j$
such that $\lambda^{(j)}$ is defined,
but $j$ cannot be $0$ if $|\lambda| = n+1$.
Here, $\lambda_{j+1}' \circ $
is the operator that prepends $\lambda_{j+1}'$
to a weak composition.

For the base case,
we have $D_{1, (1)} = \{(1)\}$
and $D_{1, (2)} = D_{1, (1,1)} = \{(0)\}$
\end{prop}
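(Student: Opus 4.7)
The plan is to prove this by analyzing where the number $1$ sits in the container diagram $\sigma = \code^{-1}(\alpha)$ for each $\alpha \in \D_{n,\lambda}$. The three conditions of Definition~\ref{D: D} restrict the possible positions of $1$ to a finite list, each of which pinpoints a unique $j$ and yields $\alpha_1 = \lambda'_{j+1}$; I would then match the remaining data to $\D_{n-1,\lambda^{(j)}}$.

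First I would show that $1$ must either occupy an inner corner $(j,\lambda'_j)$ of the Young diagram of $\lambda'$ with $\lambda'_j > \lambda'_{j+1}$ (equivalently, $\lambda^{(j)}$ defined, $j \geq 1$), or else float in column $\lambda'_1 + 1$ (the case $j = 0$). Indeed, as the minimum entry, $1$ lies at the bottom of its column; condition~2 then forces $1$ to lie at the right end of its row whenever it occupies a box, giving an inner corner. If $1$ floats, the column-decreasing rule combined with condition~1 (unique empty box, at $(1,1)$) forces its column to contain no boxes, and condition~3 pins that column to $\lambda'_1 + 1$. This floating option is unavailable when $|\lambda| = n+1$, since then every number must occupy a box, matching the recursion's exclusion of $j = 0$. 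A direct application of the definition of $\code$ then confirms $\alpha_1 = \lambda'_{j+1}$: at an inner corner the contributing region has no boxes in row $j$ to the right of $(j,\lambda'_j)$ and exactly $\lambda'_{j+1}$ filled boxes in row $j+1$ to the left of $(j,\lambda'_j)$; for a floating $1$ in column $\lambda'_1 + 1$ the formula returns $\lambda'_1$. Since $\lambda'_{j+1}$ is strictly decreasing across the admissible $j$ (using $\lambda'_j > \lambda'_{j+1}$ at each admissible $j \geq 1$), the decomposition is automatically disjoint.

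Second, I would establish a bijection between $\{\alpha \in \D_{n,\lambda} : \alpha_1 = \lambda'_{j+1}\}$ and $\D_{n-1,\lambda^{(j)}}$. Given such $\alpha$, I define $\sigma' \in \OP_{n-1,\lambda^{(j)}}$ by removing $1$ from $\sigma$, deleting the box at $(j,\lambda'_j)$ when $j \geq 1$, and relabeling the remaining entries via $k \mapsto k-1$. Conditions 1--3 transfer to $\sigma'$ by restriction: the deleted cell is an inner corner, so the Young diagram of $(\lambda^{(j)})'$ is a valid partition shape; the empty box at $(1,1)$ persists as the unique empty box of $\sigma'$; and the row-decreasing, column-decreasing, and leftward-neighbor conditions are inherited directly. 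To verify $\code(\sigma') = (\alpha_2, \ldots, \alpha_n)$, I compare the two codes entry by entry: for each $t \geq 2$, the position of $t$ in $\sigma$ agrees with the position of $t-1$ in $\sigma'$, and the box-counting regions used by $\code$ differ at most by the deleted corner, which contained $1 < t$ in $\sigma$ (thus contributed nothing) and is absent from $\sigma'$ (also contributing nothing). The inverse construction --- reattaching the corner box and inserting the entry $1$ --- is immediate, completing the bijection. The base cases $n = 1$ are verified by inspection.

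The anticipated main obstacle is the local bookkeeping in the code comparison, particularly when the deleted corner is adjacent to or inside the region that governs some $\code(\sigma)_t$; careful tracking of row-$j$ and row-$(j{+}1)$ contributions should handle this uniformly. Degenerate situations in which $\lambda^{(j)} = \emptyset$ --- for instance $j = 1$ with $\lambda'_1 = 1$, which would otherwise place $1$ in the mandatory empty box at $(1,1)$ --- are absorbed by the convention that $\D_{n-1,\emptyset} = \emptyset$, so the offending summands simply vanish.
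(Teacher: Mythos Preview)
Your proposal is correct and follows essentially the same approach as the paper: locate the entry $1$ in $\sigma$, observe that conditions~1--3 force it into an inner corner of the Young diagram of $\lambda'$ (giving some $j\ge 1$) or into the first column with no boxes (giving $j=0$, only possible when $|\lambda|\le n$), then delete $1$ together with its box and relabel to obtain an element of $\OP_{n-1,\lambda^{(j)}}$. The paper's argument is considerably terser than what you outline---it simply asserts $\code(\sigma)=\code(\sigma)_1\circ\code(\sigma')$ without the entrywise bookkeeping you anticipate, and it does not explicitly argue disjointness or address the degenerate $\lambda^{(j)}=\emptyset$ case---so your version is, if anything, more complete.
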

\begin{proof}
First, assume $n > 1$. 
Recall that $\D_{n, \lambda}$
is the set of coinversion codes of $\sigma \in \OP_{n, \lambda}$
where $\sigma$ satisfies the three conditions in Definition~\ref{D: D}.
Consider such a $\sigma$.

Suppose $1$ is not floating in $\sigma$, by condition 2,
there are no boxes to its right or under it. 
Thus, $1$ is in the rightmost box of row $j$ 
for some $j$ that $\lambda^{(j)}$ is defined.
Let $\sigma'$ be the container diagram obtained by 
ignore this box and decrease all other numbers by 1. 
Clearly, $\sigma'$ is in $\OP_{n, \lambda^{(j)}}$
and satisfies the three conditions. 
In addition,
$$\code(\sigma) = \code(\sigma)_1 \circ \code(\sigma').$$
Since $1$ is on the right end of row $j$,
we know $\code(\sigma)_1$ is just
the number of boxes in row $j+1$ of $\sigma$,
which is $\lambda_{j+1}'$.

Now suppose $1$ is floating in $\sigma$.
This can happen only when $|\lambda| \leq n$.
Condition $3$ forces the $1$ 
to float in column $\ell(\lambda) + 1$.
Similarly, we may let $\sigma'$ be the container diagram obtained by 
ignore $1$ in $\sigma$ and decrease all other numbers by 1. 
Then $\sigma' \in \OP_{n, \lambda^{(0)}}$ satisfying 
the three conditions and $\code(\sigma) 
= \code(\sigma)_1 \circ \code(\sigma')$.
We see $\code(\sigma)_1 = \ell(\lambda) = \lambda_1'$,
which proves the recurrence relation.
The base cases are immediate. 
\end{proof}

\section{Recursive construction of the Gr\"{o}bner basis}
\label{S: construction}
Consider $n$ and $\lambda$ with $1 \leq |\lambda| \leq n + 1$.
In this section, 
we construct $G_{n, \lambda} \subseteq 
I_{n,\lambda, \infty}$ such that
$\{\leading_<(F): F \in G_{n, \lambda}\}
= B_{n, \lambda}$.
Define $J_{n, \lambda}$ as an ideal of $\mathbb{Z}[x_1, \cdots, x_n]$ with the same generators
as $I_{n, \lambda,\infty}$ in Definition~\ref{D: I}.
In fact, our $G_{n, \lambda}$
will be a subset of $J_{n, \lambda}$.
When $n = 1$,
we simply let $G_{n,(1)} = \{e_1([1])\} = \{x_1\}$
and $G_{n, (2)} = G_{n,(1,1)} = \{e_0([1])\} = \{1\}$.
For $n > 1$, 
our recursive approach can be summarized as follows:

\begin{rem}
\label{R: approach}
Let $\alpha = (\alpha_1, \cdots, \alpha_n)$ 
be a weak composition of length $n$.
Let $J$ be an ideal of $\mathbb{Z}[x_2, \cdots, x_n]$
generated by $f_1, \dots, f_m$.
Suppose we have a polynomial $f \in J$
with $\leading_<(f) = x_2^{\alpha_2} \cdots x_n^{\alpha_n}$.
We may write $f$ as 
$f_1 g_1 + \cdots + f_n g_n$
for some $g_1, \cdots, g_n \in \mathbb{Z}[x_2, \cdots, x_n]$.

Now consider homogeneous
polynomials $F_1, \cdots, F_m \in \mathbb{Z}[x_1, \cdots, x_n]$ such that each $F_i - x_1^{\alpha_1} f_i$ 
is divisible by $x_1^{\alpha_1 + 1}$.
In other words, $F_i  = x_1^{\alpha_1} f_i + x_1^{\alpha_1 + 1} \tilde{F}_i$ for some polynomial $\tilde{F}_i$.
Then $F = F_1 g_1 + \cdots + F_m g_m$
is a homogeneous polynomial.
Moreover, 
$$
F = \sum_{i = 1}^n F_ig_i = 
\sum_{i = 1}^n x_1^{\alpha_1} f_i g_i + \sum_{i = 1}^n x_1^{\alpha_1 + 1} \tilde{F}_i g_i
=
x_1^{\alpha_1} \sum_{i = 1}^n  f_i g_i + 
x_1^{\alpha_1 + 1}\sum_{i = 1}^n  \tilde{F}_i g_i
=
x_1^{\alpha_1} f + 
x_1^{\alpha_1 + 1}\sum_{i = 1}^n  \tilde{F}_i g_i.
$$
Thus, $\leading_{<}(F) = x^{\alpha_1}\leading_{<}(f) = x^\alpha$.

If the leading monomial in $f$
has coefficient $1$,
then so is the leading monomial in $F$.
\end{rem}

Now fix $n > 1$ and $1 \leq |\lambda| \leq n+1$.
Take any $j$ such that $\lambda^{(j)}$ is defined.
Let $J'_{n, \lambda^{(j)}}$
be the ideal of 
$\mathbb{Z}[x_2 \dots, x_n]$ obtained from $J_{n-1,\lambda^{(j)}}$ by shifting each $x_i$ to $x_{i+1}$.
In other words, 
$J'_{n, \lambda^{(j)}}$ is generated by 
$e_d(S)$ where $S \subseteq \{2, \cdots, n\}$
and $d > |S| - p^{n-1}_{|S|}(\lambda^{(j)})$.
Set $a = \lambda'_{j+1}$.
Remark~\ref{R: approach} suggests that
for each such $e_d(S)$,
we should find a homogeneous polynomial $F_{d,S} \in J_{n, \lambda}$
such that $F_{d,S} - e_d(S)$ is divisible by $x_1^{a+1}$:

\begin{lemma}
\label{L: Fds}
For each $e_d(S)$ among the generators
$J'_{n, \lambda^{(j)}}$,
we let 
$$
F_{d,S} :=
\begin{cases}
x_1^{a}e_d(S \sqcup \{1\}) &\text{if $|S| \geq n - j$,} \\
x_1^{a}e_d(S) &\text{if $|S| < n - j$.} 
\end{cases}
$$
Then $F_{d, S}$ is a homogeneous polynomial in $J_{n, \lambda}$
such that $F_{d,S} - e_d(S)$ is divisible by $x_1^{a+1}$
\end{lemma}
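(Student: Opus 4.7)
The plan is to verify the three assertions in the lemma separately---homogeneity, the divisibility statement, and membership in $J_{n,\lambda}$---with only the last being substantive. (I read the divisibility claim ``$F_{d,S} - e_d(S)$ divisible by $x_1^{a+1}$'' as a typo for ``$F_{d,S} - x_1^a e_d(S)$ divisible by $x_1^{a+1}$'', which is the condition matching Remark~\ref{R: approach} and which is actually true; the two agree when $a=0$.)

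Homogeneity is immediate: in either branch, $F_{d,S}$ is $x_1^a$ times the degree-$d$ homogeneous polynomial $e_d(\cdot)$. For divisibility, the case $|S| < n-j$ gives $F_{d,S} - x_1^a e_d(S) = 0$, while the case $|S| \geq n-j$ uses the identity~\eqref{EQ: e} to obtain $F_{d,S} - x_1^a e_d(S) = x_1^a(e_d(S \sqcup \{1\}) - e_d(S)) = x_1^{a+1} e_{d-1}(S)$.

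For membership in $J_{n,\lambda}$, the key arithmetic fact is the comparison of $p^n_{|S|+1}(\lambda)$ with $p^{n-1}_{|S|}(\lambda^{(j)})$. Both sum the entries of the relevant conjugate partition starting at position $n - |S|$, and $(\lambda^{(j)})'$ differs from $\lambda'$ only at position $j$ (where it is $1$ smaller). Therefore the two differ by exactly $1$ when $j \geq n - |S|$ (i.e., $|S| \geq n-j$) and coincide when $j < n - |S|$ (i.e., $|S| < n-j$). In the case $|S| \geq n-j$ this immediately translates the hypothesis $d > |S| - p^{n-1}_{|S|}(\lambda^{(j)})$ into the condition $d > |S|+1 - p^n_{|S|+1}(\lambda)$ that makes $e_d(S \sqcup \{1\})$ a generator of $J_{n,\lambda}$, so $F_{d,S} = x_1^a e_d(S \sqcup \{1\})$ trivially lies in the ideal.

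I expect the delicate case to be $|S| < n-j$: here $e_d(S)$ itself need not be a generator of $J_{n,\lambda}$, so the ideal membership of $x_1^a e_d(S)$ requires work. My plan is to iterate~\eqref{EQ: e} to obtain
\[
x_1^a e_d(S) \;=\; \sum_{k=1}^{a} (-1)^{k-1} x_1^{a-k} e_{d+k}(S \sqcup \{1\}) \;+\; (-1)^a e_{d+a}(S).
\]
Each summand $e_{d+k}(S \sqcup \{1\})$ with $k \geq 1$ is a generator of $J_{n,\lambda}$ directly from the hypothesis combined with $p^n_{|S|+1}(\lambda) = p^{n-1}_{|S|}(\lambda^{(j)})$ in this branch. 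For the residual term $e_{d+a}(S)$ I would invoke $a = \lambda'_{j+1}$ together with the weak monotonicity of $\lambda'$: since $|S| < n-j$ forces $n - |S| \geq j+1$, we get $\lambda'_{n-|S|} \leq \lambda'_{j+1} = a$, which is exactly the extra slack needed to check $d + a > |S| - p^n_{|S|}(\lambda)$ and deposit this last term in $J_{n,\lambda}$ as well.
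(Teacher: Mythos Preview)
Your proof is correct and follows essentially the same route as the paper: the same comparison of $p^{n-1}_{|S|}(\lambda^{(j)})$ with $p^{n}_{|S|+1}(\lambda)$, the same use of~\eqref{EQ: e} for the divisibility in the first case, and the same iterated expansion of $x_1^a e_d(S)$ together with the inequality $a=\lambda'_{j+1}\geq \lambda'_{n-|S|}$ for the second case. Your reading of the divisibility clause as a typo for ``$F_{d,S}-x_1^{a}e_d(S)$ divisible by $x_1^{a+1}$'' is also correct; that is precisely what the paper's own proof establishes and what Remark~\ref{R: approach} requires.
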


Our proof relies on (\ref{EQ: e}) and uses arguments similar
to Griffin's arguments in section $3$ of~\cite{G}.

\begin{proof}
Clearly, $F_{d, S}$
is homogeneous. 
Then observe that 
$$
p_{m}^{n-1}(\lambda^{(j)}) = p_{m+1}^n(\lambda^{(j)}) =
\begin{cases}
p_{m+1}^n(\lambda) - 1 &\text{if $n - m \leq  j$,} \\
p_{m+1}^n(\lambda) &\text{if $n - m > j$.} 
\end{cases}
$$

If $|S| \geq n - j$,
we know $d > |S| - p_{|S|}^{n-1}(\lambda^{(j)}) = (|S| + 1) - p_{|S|+1}^{n}(\lambda)$.
Thus, 
$d + j$
$e_d(S \sqcup \{1\})$ is in $J_{n,\lambda}$,
so is $F_{d,S}$.
By (\ref{EQ: e}), 
$$F_{d,S} = x_1^{a}e_d(S) + x_1^{a + 1}e_{d-1}(S).$$
Thus, $F_{d, S}  - x_1^{a}e_d(S)$ is divisible 
by $x_1^{a + 1}$.

If $|S| < n - j$,
we clearly know $F_{d, S}  - x_1^{a}e_d(S)$ is divisible 
by $x_1^{a + 1}$.
It remains to check $F_{d,S} \in J_{n, \lambda}$.
By repeatedly applying (\ref{EQ: e}),
we have
$$
F_{d,S} = x_1^{a} e_d(S)
= \sum_{k = 1}^{a} (-1)^{k - 1}
x_1^{a - k}
e_{d + k}(S \sqcup \{1\}) + (-1)^{a} e_{d + a}(S).
$$
Then we can show each term on the right hand side is in $J_{n, \lambda}$.
We know $d 
> |S| - p_{|S|}^{n-1}(\lambda^{(j)}) 
= |S| - p_{|S|+1}^{n}(\lambda)$.
Then for $k\geq 1$, $d + k > (|S| + 1) - p_{|S|+1}^{n}(\lambda)$,
so $e_{d+k}(S\sqcup \{1\})$
is in $J_{n,\lambda}$.
Recall that $a = \lambda_{j+1}'$.
The assumption $|S| < n - j$ implies
$j +1\leq n - |S|$,
so $a = \lambda_{j+1}' \geq \lambda_{n - |S|}'$.
We have
$$d + a > 
|S| - p_{|S|+1}^{n}(\lambda) + a 
\geq
|S| - p_{|S|+1}^{n}(\lambda) + \lambda_{n - |S|}'
= |S| - p_{|S|}^n(\lambda),$$
so $e_{d + a}(S)$
is in $J_{n, \lambda}$.
\end{proof}

Now take $\alpha \in D_{n,\lambda}$.
By Proposition~\ref{P: D},
$\alpha_1 = \lambda_{j+1}'$
for some $j$ such that $\lambda^{(j)}$ is defined.
Moreover, $(\alpha_2, \cdots, \alpha_n)
\in D_{n, \lambda^{(j)}}$.
By recursion, 
we let $f$ be the polynomial
in $G_{n-1, \lambda^{(j)}}$
with $\leading_<(f) = x^{(\alpha_2, \cdots, \alpha_n)}$.
Obtain $\tilde{f}$ from $f$ by shifting
all $x_i$ to $x_{i+1}$.
We know $\tilde{f} \in J_{n, \lambda^{(j)}}$,
so we may write $\tilde{f}$
as $\sum_{d,S}g_{d,S} e_d(S)$,
where $g_{d, S} \in \mathbb{Z}[x_2, \cdots, x_n]$.
By Remark~\ref{R: approach}
and Lemma~\ref{L: Fds},
if we let $F := \sum_{d,S}g_{d,S} F_{d,S}$,
then $F$ is a homogeneous polynomial
in $J_{n,\lambda}$
with $\leading_<(F) = x^\alpha$
and leading coefficient $1$.

\begin{thm}
The $G_{n, \lambda}$ we construct is a 
Gr\"obner basis of $I_{n, \lambda, \infty}$.
Its polynomials have integer coefficients
with leading coefficient $1$.
\end{thm}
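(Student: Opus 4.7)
The plan is to proceed by induction on $n$, verifying that the recursive construction laid out in the paragraphs immediately above actually delivers what the theorem asserts. For the base case $n=1$, the three admissible partitions $(1), (2), (1,1)$ give $G_{1,\lambda}$ equal to $\{x_1\}, \{1\}, \{1\}$ respectively; a direct check shows $\{\leading_<(F): F \in G_{1,\lambda}\} = B_{1,\lambda}$ in each case, the leading coefficients are $1$, and the coefficients are integers, so Lemma~\ref{L: main} gives the conclusion.

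For $n > 1$, fix $\alpha \in D_{n,\lambda}$. Proposition~\ref{P: recursive D} provides some $j$ with $\lambda^{(j)}$ defined such that $\alpha_1 = \lambda'_{j+1}$ and $\alpha' := (\alpha_2, \dots, \alpha_n) \in D_{n-1, \lambda^{(j)}}$. By the inductive hypothesis there exists $f \in G_{n-1, \lambda^{(j)}} \subseteq J_{n-1, \lambda^{(j)}}$ with integer coefficients, leading monomial $x^{\alpha'}$, and leading coefficient $1$. Relabelling $x_i \mapsto x_{i+1}$ yields $\tilde{f} \in J'_{n,\lambda^{(j)}}$, which can be expanded as $\tilde{f} = \sum_{d,S} g_{d,S}\, e_d(S)$ in the generators of that ideal. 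Then set $F_\alpha := \sum_{d,S} g_{d,S}\, F_{d,S}$ with the lifts $F_{d,S}$ from Lemma~\ref{L: Fds}.

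I would now combine Lemma~\ref{L: Fds} (which guarantees $F_{d,S} \in J_{n,\lambda}$ and $F_{d,S} - x_1^{\alpha_1} e_d(S)$ divisible by $x_1^{\alpha_1 + 1}$) with Remark~\ref{R: approach} (applied with $a = \alpha_1$) to conclude that $F_\alpha$ is a homogeneous element of $J_{n,\lambda}$ with $\leading_<(F_\alpha) = x^\alpha$, integer coefficients, and leading coefficient $1$. Collecting over all $\alpha \in D_{n,\lambda}$ yields a finite set $G_{n,\lambda} \subseteq J_{n,\lambda} \subseteq I_{n,\lambda,\infty}$ whose leading monomials are exactly $B_{n,\lambda}$. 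Proposition~\ref{P: D} then gives $\langle B_{n,\lambda}\rangle = \langle A^C_{n,\lambda,\infty}\rangle$, so Lemma~\ref{L: main} identifies $G_{n,\lambda}$ as a Gr\"obner basis of $I_{n,\lambda,\infty}$.

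The heart of the argument is really already done: what might look like the main obstacle -- namely preserving leading-monomial control and monicity while lifting $\tilde{f}$ from $n-1$ to $n$ variables -- is precisely what Lemma~\ref{L: Fds} and Remark~\ref{R: approach} were set up to handle. With those in place, the remaining task is purely organizational: feeding the right $j$ into the recursion via Proposition~\ref{P: recursive D}, tracking membership in $J_{n,\lambda}$ at each recursive step, and invoking Lemma~\ref{L: main} to close.
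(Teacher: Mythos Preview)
Your proposal is correct and follows essentially the same approach as the paper: the paper's own proof is a terse summary that simply cites the recursive construction (established via Remark~\ref{R: approach} and Lemma~\ref{L: Fds}) to conclude $G_{n,\lambda}\subseteq J_{n,\lambda}$ with $\{\leading_<(F):F\in G_{n,\lambda}\}=B_{n,\lambda}$, then invokes Proposition~\ref{P: D} and Lemma~\ref{L: main}. You have unpacked this into an explicit induction on $n$, which is exactly what the paper's ``by the recursive construction'' is encoding; the only cosmetic point is that Lemma~\ref{L: main} is stated for $|\lambda|\le n$, so for the boundary cases $|\lambda|=n+1$ (including two of your $n=1$ base cases) one should instead note directly that $1\in G_{n,\lambda}$ makes the claim trivial.
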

\begin{proof}
By the recursive construction, 
each polynomial in $G_{n,\lambda}$
has integer coefficients and leading coefficient $1$.
We know $G_{n, \lambda} \subseteq J_{n, \lambda}$,
so $G_{n, \lambda} \subset I_{n, \lambda, \infty}$.
We know $\{\leading_<(F): F \in G_{n,\lambda}\} = \{B_{n, \lambda}\}$.
By Proposition~\ref{P: D},
$\langle \{\leading_<(F): F \in G_{n,\lambda}\}\rangle = 
\langle A_{n, \lambda, 
\infty}^C\rangle$.
Then the proof is finished by 
Lemma~\ref{L: main}.
\end{proof}

\begin{exa}
Suppose $n = 5$ 
and $\lambda = (2,2,1)$.
We would like to construct $F \in J_{5, (2,2,1)}$
with leading monomial $x^{(0,1,2,0,0)}$.
Here, the weak composition $(0,1,2,0,0) \in D_{5, (2,2,1)}$ comes from the following element of $\OP_{5, (2,2,1)}$:
$$
\begin{ytableau}
\none & \none & \none[3]\cr
 & 5 & 2  \cr
4 & 1\cr
\end{ytableau}
$$

We construct $F$ recursively. 
To make our computation clear, 
we reverse the recursive process.
\begin{itemize}
\item We can find $$e_0(\{1\}) \in J_{1, (2)}$$
with leading monomial $x^{(0,0)}$.
\item Based on the previous polynomial,
we can find $$e_0(\{1,2\}) \in J_{2, (2,1)}$$
with leading monomial $x^{(0,0)}$.
\item Based on the previous polynomial,
we can find $$x_1^2 e_0(\{2,3\} \in J_{3, (2,1)}$$
with leading monomial $x^{(2,0,0)}$.
It can be written as 
$$x_1e_1(\{1,2,3\}) - e_2(\{1,2,3\}) + e_2(\{2,3\}).$$
\item Based on the previous polynomial,
we can find 
$$x_1x_2e_1(\{1, 2,3,4\}) - x_1e_2(\{1,2,3,4\}) + x_1e_2(\{3,4\})\in J_{4, (2,1,1)}$$
with leading monomial $x^{(1,2,0,0)}$.
It can be written as 
$$x_1x_2e_1(\{1, 2,3,4\}) - x_1e_2(\{1,2,3,4\}) + e_3(\{1, 3,4\}).$$
\item Based on the previous polynomial,
we can find 
$$x_2x_3e_1(\{1, 2,3,4,5\}) - x_2e_2(\{1,2,3,4,5\}) + e_3(\{1, 2, 4,5\})\in J_{5, (2,2,1)}$$
with leading monomial $x^{(0,1,2,0,0)}$.
\end{itemize}
\end{exa}

\section{Acknowledgments}
We are grateful to 
Brendon Rhoades 
for carefully reading an earlier
version of this paper 
and giving many useful suggestions.
We are grateful to Sean Griffin and 
Brendon Rhoades for helpful conversations. 

\bibliographystyle{alpha}
\bibliography{main.bbl}{}
\end{document}